\theoremstyle{plain}
\newtheorem{thm}{Theorem}[section]
\newtheorem{prop}[thm]{Proposition}
\newtheorem{lem}[thm]{Lemma}
\newtheorem{conj}[thm]{Conjecture}
\newtheorem*{thm*}{Theorem}
\newtheorem{Def}[thm]{Definition}
\theoremstyle{remark}
\newtheorem{rmk}[thm]{Remark}
\numberwithin{equation}{section}
\newcommand{\bN}{\mathbb{N}}
\newcommand{\bR}{\mathbb{R}}
\newcommand{\bS}{\mathbb{S}}
\newcommand{\cD}{\mathcal{D}}
\newcommand{\cH}{\mathcal{H}}
\newcommand{\cL}{\mathcal{L}}
\newcommand{\cP}{{\mathcal P}}
\newcommand{\rar}{\rightarrow}
\newcommand{\SBV}{\mathrm{SBV}}
\def\ds{\displaystyle}
\def\mdet{\operatorname{det}}
\def\dist{\operatorname{dist}}
\newcommand{\res}{\mathop{\hbox{\vrule height 7pt width .5pt depth 0pt
\vrule height .5pt width 6pt depth 0pt}}\nolimits}
\title{A penalized Allen-Cahn equation for the mean curvature flow of thin structures}
\author{Elie Bretin}
\address{Univ Lyon, INSA de Lyon, CNRS UMR 5208, Institut Camille Jordan\\ 20 avenue Albert Einstein, F-69621 Villeurbanne, France\\ elie.bretin@insa-lyon.fr}
 \author{Chih-Kang Huang}
 \address{Univ Lyon, Universit\'e Claude Bernard Lyon 1, CNRS UMR 5208, Institut Camille Jordan \\43 boulevard du 11 novembre
 	1918, F-69622 Villeurbanne, France\\chih-kang.huang@hotmail.com}
 \author{Simon Masnou}
 \address{Univ Lyon, Universit\'e Claude Bernard Lyon 1, CNRS UMR 5208, Institut Camille Jordan \\43 boulevard du 11 novembre
 	1918, F-69622 Villeurbanne, France\\masnou@math.univ-lyon1.fr}
\subjclass{74N20, 35A35, 53E10, 53E40, 65M32, 35A15}
\keywords{Phase field approximation, mean curvature flow, thin structures, skeleton, Steiner problem, Plateau problem, numerical approximation}
\date{\today}
\begin{document} 

\date{\today}
\maketitle

\begin{abstract}
This paper addresses the approximation of the mean curvature flow of thin structures for which classical phase field methods are not suitable. By thin structures, we mean surfaces that are not domain boundaries, typically higher codimension objects such as 1D curves in 3D, i.e. filaments, or soap films spanning a boundary curve. To approximate the mean curvature flow of such surfaces, we consider a small thickening and we apply to the thickened set an evolution model that combines the classical Allen-Cahn equation with a penalty term that takes on larger values around the skeleton of the set. The novelty of our approach lies in the definition of this penalty term that guarantees a minimal thickness of the evolving set and prevents it from disappearing unexpectedly. We prove a few theoretical properties of our model, provide examples showing the connection with higher codimension mean curvature flow, and introduce a quasi-static numerical scheme with explicit integration of the penalty term. We illustrate the numerical efficiency of the model with accurate approximations of filament structures evolving by mean curvature flow, and we also illustrate its ability to find complex 3D approximations of solutions to the Steiner problem or the Plateau problem.
\end{abstract}
\maketitle

\section{Introduction}

The aim of this paper is to introduce a model and a numerical method for approximating the evolution of thin structures via mean curvature flow. As a specific example of thin structures, we focus on filaments in 3D space, which are typical codimension 2 objects possibly with singularities.  The method we introduce is actually suitable for more general structures such as surfaces which are not domain boundaries. It is therefore relevant to various physical applications, as we illustrate with numerical approximations of solutions to the Steiner or Plateau problems in 3D. 

The evolution by mean curvature of thin structures such as filaments falls into the category of higher codimension mean curvature flows, i.e. mean curvature flows of submanifolds with codimension at least 2.  A first list of references (see~\cite{BellettiniBook}) for the definition, the analysis and the approximation of higher-codimension flows is~\cite{altschuler1991singularities, degiorgi, ambrosio1996level, BelNov99, slepcev, wang2002long,ambrosio1997measure,BOS06}. More references and details are provided in the survey paper~\cite{smoczyk2011mean}.

Regarding the numerical approximation of higher codimension mean curvature flows, the first schemes capable of handling multiple junctions and interfaces were introduced in \cite{merriman1994motion} using a level set approach, see also~\cite{laux2019analysis}. The curve shortening flow in higher codimension was approximated numerically with finite elements in \cite{dziuk,deckelnik-dziuk,barrett2010,barrett2012,DN19}, with a semi-Lagrangian scheme and a level set approach in \cite{carlini} (for codimension 2 curves), and with a parametric approach in \cite{MU-14}. Finite elements were also used for approximating the anisotropic curve shortening \cite{Pozzi07} or the mean curvature flow of surfaces in higher or even arbitrary codimension \cite{Pozzi08,binz2023convergent}. 

In general, the above numerical approaches are not as effective as their counterparts designed for codimension 1 surfaces. This paper aims to address this limitation by showing how classical phase field methods can be used to approximate, at least formally, the mean curvature flow of codimension 2 interfaces (possibly combined with boundary constraints). This is achieved considering tubular neighborhoods of the interfaces and their mean curvature flow associated with thickness constraints.

\subsection{Classical mean curvature flow and its phase field approximation}

The classical mean curvature flow starting from a smooth open set $\Omega \subset \bR^N$ is a time-dependent set $\Omega(t)$ whose inner normal velocity $V_n(x)$, as long as it can be defined, at each point $x$ of $\partial\Omega(t)$ is proportional to the scalar mean curvature $H_{\partial\Omega(t)}(x)$. Here, we use the sign convention that the scalar mean curvature of a convex surface is nonnegative.
Up to time rescaling, the evolution flow takes the following form
$$
V_n(x) =H_{\partial \Omega(t)}(x), \quad x \in \partial \Omega(t),
$$
which corresponds to the evolution of $\partial \Omega(t)$ by the $L^2$-gradient flow of the perimeter 
$$
P(\Omega(t)) = \cH^{N-1}(\partial \Omega(t)),
$$
where $\cH^{N-1}$ is the $(N-1)$-Hausdorff measure.

 This emblematic geometric flow is the subject of a rich literature, see for example~\cite{amb-mant-96, BellettiniBook} and references therein. To deal with singularities, which can develop in finite time and mark the limit of existence of the smooth flow, various notions of weak mean curvature flows have been introduced, see the references in~\cite{amb-mant-96, BellettiniBook}. The numerical approximation of mean curvature flow has been addressed with various models and numerical methods, in particular parametric methods, level set approaches or phase field methods, see~\cite{AmbrosioNotes2000} for a theoretical perspective on these approaches. It is worth emphasizing that a wider class of singularities can be handled by the latter two methods, see an example in Figure~\ref{fig:dumbbell}. 
 
 \begin{figure}[htbp]
\centering
\advance \leftskip-2cm\advance \rightskip-2cm
	\includegraphics[width=15cm]{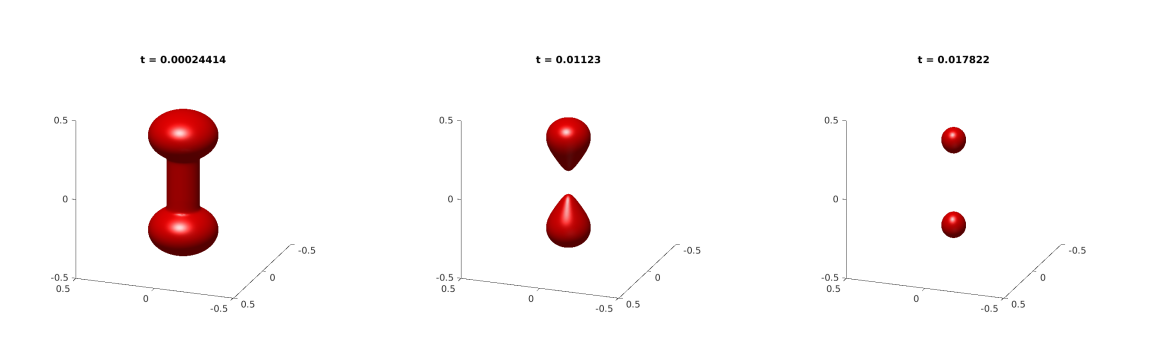} 
	
\caption{
Phase field approximation beyond singularities of the mean curvature flow of a dumbbell.}
\label{fig:dumbbell}
\end{figure}

 In the theory of phase field approximation, the fundamental work of Modica and Mortola~\cite{Modica1977}
 showed that the perimeter of a set can be approximated in the sense of $\Gamma$ convergence (for the $L^1$ topology) by
 the smooth Van der Waals--Cahn--Hilliard energy
 \begin{eqnarray} \label{P_eps}
 P_\varepsilon(u) &=& \ds\int_Q \left( \ds\frac{\varepsilon}{2} |\nabla u|^2 + \ds\frac{1}{\varepsilon} W(u) \right) \,dx.
  \end{eqnarray}
 where $Q \subset \bR^2$ is a sufficiently large fixed bounded domain,
 $\varepsilon>0$ is a small parameter and $W$ is a smooth double-well potential,
 typically
 \begin{eqnarray*}
 W(s) &=& \ds\frac{1}{2}s^2(1-s)^2.
 \end{eqnarray*}
 In particular, the characteristic function of a set $\Omega$ of finite perimeter can be approximated in $L^1$ by functions of the form $u_\varepsilon = q(\frac{\dist(x,\Omega)}{\varepsilon})$ so that $P_\varepsilon(u_\varepsilon) \rightarrow c_W P(\Omega)$ as $\varepsilon\to 0$, with
 $c_W = \int_0^1 \sqrt{2 W(s)} ds$. Here, $q(s) = \frac{1}{2} (1 - \tanh(s))$ is an optimal profile for the problem (a minimizer of the 1D Van der Waals--Cahn--Hilliard energy with suitable values at $\infty$) and $\dist(\cdot,\Omega)$  is the signed distance function to $\Omega$ with the convention that $\dist(\cdot, \Omega)$ takes negative values in the interior of $\Omega$, and nonnegative values outside.
 
%
 The $L^2$-gradient flow of the Van der Waals--Cahn--Hilliard energy $P_\varepsilon$ 
 results in the Allen-Cahn equation \cite{Allen1979}, i.e., up to a time rescaling, 
 \begin{eqnarray}\label{eq_AllenCahn}  
 u_t &=&  \Delta u - \frac{1}{\varepsilon^2}W'(u).
 \end{eqnarray}
 The Cauchy problem for this nonlinear parabolic equation has a unique solution that satisfies a comparison
 principle, see for instance~ \cite[chap. 14]{AmbrosioNotes2000}.
 Furthermore, a smooth set $\Omega$ evolving by mean curvature flow can be approximated by
 $$ 
 \Omega^\varepsilon(t) 
 = \left\{x \in \bR^d,\; u^\varepsilon(x,t) \geq \frac{1}{2} \right\},
 $$
 where $u^\varepsilon$ solves~\eqref{eq_AllenCahn} with initial condition
 $$u^{\varepsilon}(x,0) = q \left( \frac{\dist(x,\Omega(0))}{\varepsilon}\right).$$
 
 A formal asymptotic expansion of $u^{\varepsilon}$ near the interfaces~\cite{BellettiniBook2013} 
 shows that $u^{\varepsilon}$ is quadratically close to the optimal profile, that is, 
 $$ 
 u^{\varepsilon}(x,t) = q \left( \frac{\dist(x,\Omega^{\varepsilon}(t))}{\varepsilon}\right) + O(\varepsilon^2),
 $$
 and the inner normal velocity $V^{\varepsilon}$  of the interface $\{ u^{\varepsilon} = \frac{1}{2}\}$ satisfies
 $$ V^{\varepsilon} = H + O(\varepsilon^2).$$
The convergence of $\partial \Omega_{\varepsilon}(t)$ to $\partial \Omega(t)$ as $\varepsilon \to 0 $ has been rigorously
 proved for smooth flows in \cite{Chen1992, Mottoni1995, Bellettini1995} with a quasi-optimal 
 convergence order $O(\varepsilon^2 | \log \varepsilon |^2)$.
 The fact that $u^{\varepsilon}$ is quadratically close to the optimal profile
 has inspired the development of very effective numerical methods
 in~\cite{BenceMerrimanOsher,Ruuth_efficient,BrasselBretin2011}.

\subsection{Mean curvature flow of thin tubular sets and thickness constraint}
Given a set $\Sigma$ of codimension $2$, the central idea of this paper to approximate the mean curvature flow of $\Sigma$ is to consider a $\sigma$-neighborhood $\Sigma_{\sigma}$ and a phase field mean curvature flow of $\Sigma_{\sigma}$ with a control of the thickness to prevent the disappearance of the surface or any topological change. A theoretical analysis of such an approach is proposed in \cite{BelNov99} under the assumption that the thickness is bounded from below. This assumption is however difficult to adapt numerically, and the novelty of our approach is to introduce a penalization that both constrains implicitly the thickness and is much more suitable for numerical approximation. More precisely, we consider the perturbed Allen-Cahn equation
\begin{equation}
\partial_t u^\varepsilon = 
\Delta u^\varepsilon - \frac{W'(u^\varepsilon)}{\varepsilon^2} (1+ f^{\sigma}),
\label{pac}
\end{equation}
where $f^{\sigma}$ represents the forcing term and acts as a Lagrange multiplier associated with an implicit constraint that the thickness is bounded from below by $\sigma$.

As we will see, the sharp interface limit of this equation gives the following 
law at the boundary of $\Sigma_{\sigma}$: 
$$ V_n = H - \frac{\nabla  f^{\sigma} \cdot n }{2 (1 + f^{\sigma})},$$
where $n$ is the inner unit normal on $\partial\Sigma_{\sigma}$, $V_n$ is the scalar normal velocity and $H$ is the mean curvature.

The main challenge here is the choice of an appropriate form for the forcing term $f^{\sigma}$. To this end, we exploit the properties of the signed distance function and the asymptotic expansion of the solutions to the Allen-Cahn equation, enabling us to locate the skeleton of $\Sigma_{\sigma}$ that we take as an approximation of $\Sigma$. We propose the following form for $f^{\sigma}$:
$$ f^{\sigma}  \simeq  h^\sigma * |  \langle n^\sigma_{u^{\varepsilon}}, (\nabla n^\sigma_{u^{\varepsilon}})^{T} n^\sigma_{u^{\varepsilon}} \rangle|$$
where $n^\sigma_{u^{\varepsilon}} = h^\sigma * \frac{\nabla u^{\varepsilon}}{|\nabla u^{\varepsilon}|}$ and $h^\sigma$ is a kernel, typically a Gaussian kernel of standard deviation $\sigma$. 
We provide theoretical and numerical arguments to show that this particular choice of forcing term allows to locate the skeleton of $\Sigma_{\sigma}$ and, in practice, guarantees a minimal distance of $\sigma$ between $\partial\Sigma_{\sigma}$ and the skeleton. We also provide theoretical and numerical arguments showing the connection of our modified Allen-Cahn equation with the mean curvature flow of $\Sigma$, thus the connection with possibly higher codimension mean curvature flow. In particular, our approach provides an accurate numerical approximation of the mean curvature flow of tubular sets, possibly with endpoints constraints as in the Steiner problem. We will also illustrate that our model can be extended to approximate solutions to the Plateau problem, even in the case where the solution is not orientable. This proves the versatility of our method and its ability to handle complex geometric problems.

\subsection{Outline of the paper}

The paper is organized as follows:

In \textbf{Section 2} we consider a forced mean curvature flow of a domain $\Omega$ with normal velocity $V_n = H + g$. We show that, being appropriately chosen, a forcing term $g$ can prevent the boundary $\partial \Omega$ from approaching a fixed set $\Sigma$ of codimension $2$. 

\textbf{Section 3} is devoted to characterizing the skeleton of $\Omega$ by exploiting the properties of its signed distance function. The key idea is to approximate an expression of the form 
$$Sn = \langle n, \nabla n ^{T} n \rangle,$$ where $n$ is the gradient of the distance function.

In \textbf{Section 4}, we discuss a quasi-static numerical discretization of the perturbed Allen-Cahn equation~\eqref{pac}. The section ends with numerical experiments that illustrate the ability of our approach to model accurately the evolution by mean curvature of filaments.

 Lastly, in \textbf{Section 5} and \textbf{Section 6}, we explain how to couple the perturbed equation with additional inclusion constraints. We deduce numerical approximations of solutions in 3D to the Steiner problem and to the Plateau problem, using for the latter a penalization of the volume.

\section{Approximation of the mean curvature flow with thin fixed obstacles}

Given a closed obstacle $\Sigma\subset\bR^N$ (typically a closed Lipschitz set of codimension $2$ possibly with boundary, but some results proved below remain true for more general sets), the aim of this section is to study how the classical Allen-Cahn equation can be modified to approximate the evolution by mean curvature of the boundary of a smooth bounded open set $\Omega$ constrained to remain at a distance not smaller than $\sigma$  from $\Sigma$.


\subsection{Motivations}
We assume in the following that the obstacle $\Sigma$ is contained in the interior of a smooth open set $\Omega$ with a distance from $\partial \Omega$ at least equal to $\sigma > 0$.  The case where it is contained in the complementary of $\Omega$ can be considered in a similar way. We begin by proving that, the distance from $\Sigma$ to $\partial \Omega$ being positive, the mean curvature at the point of $\partial \Omega$ closest to $\Sigma$ is bounded from above. This result is essential since, if we consider the mean curvature flow with a forcing term $g$, i.e. $V_n = H + g$, it is possible to choose $g$ large enough near $\Sigma$ to be sure that it prevents $\partial \Omega$ from flowing closer to $\Sigma$. Moreover, choosing $g$ close to $0$ far from $\Sigma$ makes it inactive and the constrained flow of $\partial\Omega$ far from $\Sigma$ is very close to the classical mean curvature flow.

\begin{lem}
Let $\Sigma\subset\bR^N$ be a closed set, and $\Omega\subset\bR^N$ be an open, bounded set with $C^2$ boundary such that $\operatorname{dist}(\partial \Omega, \Sigma) =  \delta > 0$. Let $s_0\in \partial \Omega$ be such that $\operatorname{dist}(s_0, \Sigma)=\operatorname{dist}(\partial \Omega, \Sigma)$. Then 
$$H_\Omega(s_0) \leq \frac{N-1}{\delta}.$$
\label{lem:mean_curvature_control}
\end{lem}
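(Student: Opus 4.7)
The plan is to apply the classical interior-sphere (or ``interior ball'') comparison. I first pick a closest point $\sigma_0 \in \Sigma$ to $s_0$, which exists by closedness of $\Sigma$. Any point of the open ball $B(\sigma_0, \delta)$ lies at distance strictly less than $\delta$ from $\Sigma$, and therefore cannot belong to $\partial\Omega$; being connected, this ball is thus contained either in $\Omega$ or in $\bR^N \setminus \overline{\Omega}$. Under the standing assumption of the section that $\Sigma$ sits in the interior of $\Omega$, the center $\sigma_0$ is in $\Omega$, so $B(\sigma_0, \delta) \subset \Omega$.

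Next, a first-order optimality argument applied to $s \mapsto |s - \sigma_0|^2$ restricted to $\partial\Omega$ shows that $\sigma_0 - s_0$ is orthogonal to $T_{s_0}\partial\Omega$ and, since $\sigma_0 \in \Omega$, points in the direction of the inward unit normal. After a rigid motion placing $s_0$ at the origin with inward normal $e_N$, I write $\partial\Omega$ locally as a graph $x_N = \phi(x')$ with $\phi(0)=0$, $\nabla\phi(0)=0$, and $\Omega = \{x_N > \phi(x')\}$ near the origin; likewise the lower hemisphere of $\partial B(\sigma_0,\delta)$ is the graph $x_N = \psi(x')$ with $\psi(x') = \delta - \sqrt{\delta^2 - |x'|^2}$, so that $\psi(0)=0$, $\nabla\psi(0)=0$, and $D^2\psi(0) = \tfrac{1}{\delta} I_{N-1}$. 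The inclusion $B(\sigma_0,\delta) \subset \Omega$ forces $\psi(x') \geq \phi(x')$ in a neighborhood of the origin, so $\psi - \phi$ attains a local minimum of value zero at $0$, and comparing Hessians yields $D^2 \phi(0) \leq \tfrac{1}{\delta} I_{N-1}$ as symmetric matrices.

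With the paper's sign convention (convex surfaces have nonnegative mean curvature), the principal curvatures of $\partial\Omega$ at $s_0$ associated with the inward normal are the eigenvalues of $D^2\phi(0)$; each is therefore bounded above by $1/\delta$, and summing gives $H_\Omega(s_0) \leq (N-1)/\delta$. The only delicate point I anticipate is the bookkeeping of sign conventions for the normal and the mean curvature, so as to match the choice stated in the paper; the geometric content is the standard interior-ball touching argument and presents no genuine analytic difficulty.
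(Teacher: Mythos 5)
Your proof is correct and rests on the same geometric idea as the paper's: the ball of radius $\delta$ centered at the nearest point of $\Sigma$ touches $\partial\Omega$ at $s_0$ from inside, and the second-order comparison with that sphere bounds the principal curvatures by $1/\delta$ (the paper carries this out by Taylor-expanding the inequality $\|x\|^2 - 2\delta\psi(x) + \psi(x)^2 \geq 0$ along coordinate directions, which is your Hessian comparison in coordinates). You are in fact slightly more careful than the paper in noting that the conclusion uses the section's standing assumption that $\Sigma$ lies in the interior of $\Omega$, so that the tangent ball is an interior rather than exterior ball.
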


\begin{proof}

Thanks to the definitions of $s_0$ and $\delta >0$, there exists a point $p \in \Sigma$ such that $\operatorname{dist}(s_0, \Sigma) = \| s_0 - p \|= \delta > 0$.
Up to some linear transformation on $\bR^N$, we can assume that $p$ has coordinates $(0_{\bR^{N-1}}, \delta)$, $s_0$ has coordinates $(0_{\bR^{N-1}}, 0)$ and the interface $\partial \Omega$ can be locally considered as the graph of a regular fonction $\psi: V\subset\bR^{N-1} \to \bR$ defined on a neighborhood $V$ of $0_{\bR^{N-1}}$ such that $\psi(0_{\bR^{N-1}}) = 0$ and $\nabla \psi(0_{\bR^{N-1}}) = 0_{\bR^{N-1}}$. 
Therefore, the scalar mean curvature of $\partial\Omega$ at $s_0=(0_{\bR^{N-1}}, 0)$
is 
$$
H_\Omega(s_0) =  \operatorname{div} \left( \frac{\nabla \psi}{\sqrt{1+ |\nabla \psi |^2}} \right)(0_{\bR^{N-1}}) = \Delta \psi (0_{\bR^{N-1}})
$$
and we have that
\begin{equation}
\psi(x) = \frac{1}{2} \langle x, \nabla^2 \psi(0_{\bR^{N-1}})x \rangle+ o(\|x\|^2), \quad\mbox{ as } x \rar 0.
\label{eq:taylor_cdp}
\end{equation}
Thanks to the definition of $\delta$, for all $x$ in a neighborhood of $0_{\bR^{N-1}}$, 
\begin{equation}
\|x-0_{\bR^{N-1}}\|^2 + \left( \psi(x) - \delta \right)^2 \geq \delta^2,
\end{equation}
which gives 
\begin{equation}
\|x\|^2 -2 \delta \psi(x) + \psi(x)^2 \geq 0.
\label{eq:taylor_cdp2}
\end{equation}
Hence, by considering $x = \epsilon e_j$ where $\left( e_1, \dots, e_{N-1} \right)$ is the canonical basis of $\bR^{N-1}$and by dividing the above inequality by $\epsilon^2$, as $\epsilon$ goes to $0$, thank to \eqref{eq:taylor_cdp}, we obtain that 
$$
\delta(\nabla^2 \psi(0))_{jj} \leq 1,
$$
which leads to 
$$
H_\Omega(s_0)= \sum_{j=1}^{N-1} \left( \nabla^2 \psi(0) \right)_{jj} \leq \frac{N-1}{\delta}.
$$
\end{proof}

\subsection{The Allen--Cahn equation with a forcing term}
A classical perturbation of the Allen--Cahn equation consists in adding a forcing term $g^{\varepsilon}$ and considering
$$ \partial_t u^\varepsilon =  \Delta u^\varepsilon - \frac{W'(u^\varepsilon)}{\varepsilon^2} + g^\varepsilon.$$
If the forcing term $g^\varepsilon$ satisfies 
\begin{equation}
\sup_{\varepsilon > 0} \int_0^T \int \varepsilon g^\varepsilon(x, t) dx dt < +\infty,
\label{eq:forcing_term_control}
\end{equation}
Mugnai and R\"oger proved in \cite{MugnaiRoger2009} that the above phase field model converges to a forced mean curvature flow with normal velocity
$$
V_n= H +g,
$$
in the sense of varifolds, where $g$ is characterized by 
$$
\lim_{\varepsilon \to 0 } \int_0^T \int_{\bR^N}  \eta \cdot \nabla u^\varepsilon g^\varepsilon \, dx \, dt =
\int_0^T \int _{\bR^N} \eta \cdot g \, d \mu 
,
$$
for every $\eta \in C_c^0((0, T) \times  \bR^N,  \bR^N)$. Here $\mu$ is a Radon measure on $(0, T) \times  \bR^N$ obtained as the limit of \emph{diffuse surface area measures} $\mu^\varepsilon$ defined as 
$$
\mu^\varepsilon := \left( \frac{\varepsilon}{2} |\nabla u^\varepsilon |^2 + \frac{1}{\varepsilon} W(u^\varepsilon) \right) \cL^{N+1}
.
$$

From a variational point of view, recall that the Allen-Cahn equation is the $L^2$-gradient flow of the Cahn-Hilliard energy ${ P}_\varepsilon$ which approximates the perimeter, and the forcing term $g^{\varepsilon}$ can  also be derived from a penalization term  of the form
$$ \varepsilon \int g^{\varepsilon} u dx.$$
However, this penalized model is not well suited to our problem, as it penalizes the whole interior of $\Omega^{\varepsilon}$ (see \cite{Bretin_perrier}) and not only the interface 
$\partial \Omega^{\varepsilon}$. An easy way to focus on the boundary of $\Omega^{\varepsilon}$  involves a weight of the form $W(u^{\varepsilon})$ which is active only near  $\partial\Omega^{\varepsilon}$, and a penalty term of the form
$$ \frac{1}{\varepsilon} \int_{\Sigma}W(u^{\varepsilon}(x,t)) d \cH^{N-2}(x),$$
or a smooth version 
$$ \frac{1}{\varepsilon} \int f^{\sigma}(x) W(u^{\varepsilon}(x,t)) dx,$$
where $f^{\sigma}$ is a nonnegative smooth approximation of $\cH^{N-2}\res\Sigma$, the $(N-2)$-Hausdorff measure supported on $\Sigma$. Such a choice 
leads to the perturbed Allen--Cahn equation 
$$ \partial_t u^\varepsilon = 
\Delta u^\varepsilon - \frac{W'(u^\varepsilon)}{\varepsilon^2} (1+ f^\sigma),$$ 
which has been studied by Qi and Zheng in \cite{QiZheng2018}. In particular, it is proved in  \cite{QiZheng2018} that if $1+ f^\sigma > 0 $  then its sharp interface limit corresponds to a forced mean curvature flow in the sense of varifolds with a vector motion law given by 
\begin{equation}
V= H - \frac{\nabla^\perp  f^\sigma}{2 (1 + f^\sigma)}. 
\label{eq:forced_mean_curvature_flow}
\end{equation}
where $\nabla^\perp  f^\sigma$ is obtained from $\nabla f^\sigma$ by removing its tangential component.

We will now discuss the behavior of the forcing term in \eqref{eq:forced_mean_curvature_flow} and prove that, under certain conditions on $f^\sigma$, \eqref{eq:forced_mean_curvature_flow} provides a flow with obstacle.

\subsection{Choosing the forcing term} 
A natural choice for $f^{\sigma}$ is an approximation of $\cH^k\res{\Sigma}$, for example the convolution of $\cH^k\res{\Sigma}$  with a Gaussian kernel $h^{\sigma}(x)=  \frac{1}{\sigma^N } e^{-\frac{|x|^2}{\sigma^2}}$.

\subsubsection{Case where $\Sigma$ is a single point}
We first consider the simplest case where $\Sigma=\{0\}$ and  the forcing term $f^{\sigma}$ is defined by
$$
f^\sigma(x) = \delta_{0}*h^{\sigma}(x) = \frac{1}{\sigma^N } e^{-\frac{|x|^2}{\sigma^2}}.
$$

\begin{lem} To any $0<\sigma<1$ we associate the function
$
f^\sigma(x) = \sigma^{-N} e^{-\frac{|x|^2}{\sigma^2}}.
$
For all $0<\sigma< \exp(\frac{2(1-N)}N)$, there exists an interval $I_\sigma$ containing $\sigma \sqrt{-N\ln \sigma}$ such that for every open bounded set $\Omega\ni 0$ with smooth boundary and satisfying $\operatorname{dist}(0,\partial \Omega)=\delta>0$ with $\delta\in I_\sigma$, and for $s_0\in \partial \Omega$ such that $|s_0|=\operatorname{dist}(0,\partial \Omega)=\delta$, there holds
$$H_\Omega(s_0) - \frac{\nabla  f^\sigma \cdot n }{2 (1 + f^\sigma)}(s_0)<0$$
where $n(s_0)$ is the inner unit normal to $\partial\Omega$ at $s_0$.
\label{claim:minimal_distance}
\end{lem}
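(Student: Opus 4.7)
The approach is to work at the closest point $s_0 \in \partial\Omega$ to the origin, where both the inner normal direction and the normal derivative of $f^\sigma$ become explicit, then combine the curvature bound from Lemma~\ref{lem:mean_curvature_control} with a direct evaluation of the forcing term.

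First, I would establish the geometry at $s_0$. Since $0$ lies in the interior of $\Omega$ and $s_0 \in \partial\Omega$ realizes the distance $\delta = \operatorname{dist}(0,\partial\Omega)$, a standard first-variation argument shows the segment from $0$ to $s_0$ is orthogonal to $\partial\Omega$ at $s_0$, so the inner unit normal is $n(s_0) = -s_0/|s_0|$. Since $f^\sigma(x) = \sigma^{-N}e^{-|x|^2/\sigma^2}$, one computes $\nabla f^\sigma(x) = -\tfrac{2x}{\sigma^2}\,f^\sigma(x)$, so at $s_0$
$$\nabla f^\sigma(s_0)\cdot n(s_0) \;=\; \frac{2\delta}{\sigma^2}\,f^\sigma(s_0) \;=\; \frac{2\delta}{\sigma^{N+2}}\,e^{-\delta^2/\sigma^2}.$$
Combining this with Lemma~\ref{lem:mean_curvature_control}, the quantity to control is bounded above by
$$H_\Omega(s_0) - \frac{\nabla f^\sigma\cdot n}{2(1+f^\sigma)}(s_0) \;\leq\; \frac{N-1}{\delta} - \frac{\delta\,\sigma^{-N-2}e^{-\delta^2/\sigma^2}}{1+\sigma^{-N}e^{-\delta^2/\sigma^2}}.$$

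Next, I would evaluate this upper bound at the distinguished radius $\delta^\star = \sigma\sqrt{-N\ln\sigma}$, chosen precisely so that $\sigma^{-N}e^{-(\delta^\star)^2/\sigma^2} = 1$. The upper bound then simplifies to
$$\frac{N-1}{\delta^\star} - \frac{\delta^\star}{2\sigma^2},$$
which is strictly negative exactly when $(\delta^\star)^2 > 2(N-1)\sigma^2$, i.e. $-N\ln\sigma > 2(N-1)$, i.e. $\sigma < \exp\!\bigl(\tfrac{2(1-N)}{N}\bigr)$. This matches the hypothesis of the lemma and gives the inequality strictly at $\delta = \delta^\star$.

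Finally, I would upgrade this pointwise strict inequality to an open interval. The right-hand side of the displayed upper bound depends continuously on $\delta$, and is strictly negative at $\delta = \delta^\star$ under the assumption on $\sigma$. Hence there exists an open neighborhood $I_\sigma$ of $\delta^\star$ on which this bound remains negative, and by Lemma~\ref{lem:mean_curvature_control} the same is true for $H_\Omega(s_0) - \frac{\nabla f^\sigma\cdot n}{2(1+f^\sigma)}(s_0)$ for \emph{every} admissible $\Omega$ with $\delta \in I_\sigma$. The main (and only mildly delicate) point is checking that the threshold $\sigma < \exp(\tfrac{2(1-N)}{N})$ is sharp for this argument — the rest is elementary bookkeeping with the Gaussian profile.
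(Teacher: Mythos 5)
Your proof is correct and follows essentially the same route as the paper's: the same curvature bound from Lemma~\ref{lem:mean_curvature_control}, the same explicit evaluation of $\nabla f^\sigma\cdot n$ at $s_0$, the same distinguished radius $\delta^\star=\sigma\sqrt{-N\ln\sigma}$ (where $\sigma^{-N}e^{-(\delta^\star)^2/\sigma^2}=1$), and the same continuity argument to produce the interval $I_\sigma$. The only cosmetic difference is that the paper packages the computation into an auxiliary function $w(\delta)$ and invokes the intermediate value theorem, whereas you evaluate the upper bound directly at $\delta^\star$; the content is identical.
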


\begin{rmk}
This result implies that, for a time-dependent set $\Omega(t)$ evolving by the forced mean curvature motion \eqref{eq:forced_mean_curvature_flow} and such that $0 \in \Omega^-$, the forcing term $f^\sigma$ acts as a repulsive source in a ring around $0$. These observations can be extended to the case where the obstacle is a general set $C^2$.

\end{rmk}

\begin{rmk}

One of the advantages of using the forced flow \eqref{eq:forced_mean_curvature_flow} is that when the interface is far from the obstacle, it behaves almost exactly like a mean curvature flow. When the interface starts to approach the obstacle, the perturbed term prevents the interface from self-colliding by forcing the interface and the obstacle to remain beyond a minimum distance, denoted by $\tilde{\sigma}$ as a function of $\sigma$ when the interface becomes stationary. It is natural to extend this idea to dynamic obstacles, and even to obstacles provided by the interface itself, typically its skeleton. This is the subject of the next section.
\end{rmk}

\begin{proof}[Proof of Lemma~\ref{claim:minimal_distance}]
By the definitions of $\delta$ and $s_0$, we first notice that 
$$\nabla f^\sigma (s_0) \cdot n(s_0) = \partial_{n(s_0)} f^\sigma$$ where $$n(s_0)= - \frac{s_0}{|s_0 |} = -\frac{s_0}{\delta}.$$
Therefore, 
$$
f(s_0) =\frac{1}{\sigma^N} e^{-\frac{\delta^2}{\sigma^2}} \qquad\mbox{ and } \qquad
\nabla f^\sigma(s_0) \cdot n(s_0) =  \frac{2\delta}{\sigma^{N+2}} e^{-\frac{\delta^2}{\sigma^2}}.
$$
Thanks to Lemma \ref{lem:mean_curvature_control}, it follows that
$$H_\Omega(s_0) - \frac{\nabla  f^\sigma \cdot n }{2 (1 + f^\sigma)}(s_0)\leq \frac{N-1}{\delta} -
\frac{  \frac{\delta}{\sigma^{(N+2)}} e^{ -\frac{\delta^2}{\sigma ^2}}}{ 1 + \frac{1}{\sigma^N} e^{ -\frac{\delta^2}{\sigma^2}}}
.$$
In order to have the left-hand term negative, it is sufficient to ensure that,  
$$
w(\delta) := \frac{\frac{\delta^2}{\sigma^{(N+2)}} e^ {-\frac{\delta^2}{\sigma^2}}}{ 1 + \frac{1}{\sigma^N} e^{-\frac{\delta^2}{\sigma^2}}} > N-1.
$$
Since $w$ is continuous on $[0, +\infty[$, satisfies $\lim_{\delta \rar +\infty} w(\delta) = w(0) = 0$,  and as $0<\sigma <1$,  
$$
\max_{[0, +\infty[} w \geq w(\sigma \sqrt{-\ln \sigma^N}) 
= \frac{-\ln \sigma^N}{2}
.
$$
we deduce from the intermediate value theorem that for all $0<\sigma<\exp{\frac{2(1-N)}N}$ there exists an interval $I_\sigma$ containing $\sigma \sqrt{-\ln \sigma^N}$ such that for every $\delta\in I_\sigma$, $w(\delta)>N-1$, and the lemma ensues.
\end{proof}

\subsubsection{Case of a general fixed obstacle}

Thanks to Lemma~\ref{lem:mean_curvature_control}, we show in the following that, for $\sigma>0$ small enough, we can avoid the interface colliding with a given \emph{static obstacle}.

\begin{thm}
Let $\Sigma \subset \bR^n$ be a closed $k$-rectifiable set and, for $0<\sigma<1$, let 
$
f^\sigma = h^\sigma * \cH^k\res \Sigma
$. Let $t\in[0,T]\mapsto \Omega(t)$ be the smooth mean curvature flow of a smooth open, bounded set. There exists $0<\sigma<1$ such that, if $\Sigma\subset\Omega(0)^-$ (the interior of $\Omega(0)$) and $\operatorname{dist}(\partial \Omega(0), \Sigma) \geq \delta>0$,  then for every $t \in [0,T]$,
$
\operatorname{dist}(\partial \Omega(t), \Sigma) \geq  \delta.
$
\label{thm:no_contact}
\end{thm}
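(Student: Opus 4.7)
The plan is to argue by contradiction, combining a continuity argument on the function $d(t):=\dist(\partial\Omega(t),\Sigma)$ with an infinitesimal inequality at the first contact time, generalizing the single-point analysis of Lemma~\ref{claim:minimal_distance} to the $k$-rectifiable case. I interpret the evolution as the forced flow \eqref{eq:forced_mean_curvature_flow} (the only one compatible with the statement, since a pure mean curvature flow could not preserve any positive distance to an interior obstacle).

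Assume the conclusion fails. By continuity of the smooth flow $t\mapsto\partial\Omega(t)$ and compactness of $\Sigma\subset\overline{\Omega(0)}$, there exists a first time $t^*\in(0,T]$ at which $d(t^*)=\delta$, realized by minimizers $s_0\in\partial\Omega(t^*)$ and $p\in\Sigma$ with $|s_0-p|=\delta$. Since $p\in\Omega(t^*)^-$, the segment $\overline{s_0 p}$ is normal to $\partial\Omega(t^*)$ at $s_0$, so the inner unit normal is $n(s_0)=(p-s_0)/\delta$. The first-contact property of $t^*$ produces a sequence $t_n\searrow t^*$ with $\dist(p,\partial\Omega(t_n))<\delta$, so the signed distance $\psi(t):=\tilde d(p,t)$ (negative inside) satisfies $\psi(t^*)=-\delta$ and $\psi'(t^{*+})\geq 0$. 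Via the classical identity $\partial_t\tilde d(p,t)=V_n(s_0(t),t)$, valid in a space-time neighborhood of $(s_0,t^*)$ by smoothness of the flow and uniqueness of the nearest-point projection, this yields
\begin{equation}
V_n(s_0,t^*)\;\geq\;0.
\label{eq:velocity_nonneg_plan}
\end{equation}

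I would then derive a contradiction by showing that \eqref{eq:forced_mean_curvature_flow} makes the right-hand side of
$$V_n(s_0,t^*)=H_{\Omega(t^*)}(s_0)-\frac{\nabla f^\sigma(s_0)\cdot n(s_0)}{2\,(1+f^\sigma(s_0))}$$
strictly negative for $\sigma$ small enough. Lemma~\ref{lem:mean_curvature_control} already gives $H_{\Omega(t^*)}(s_0)\leq (N-1)/\delta$, so the task reduces to establishing the pointwise bound $\frac{\nabla f^\sigma(s_0)\cdot n(s_0)}{2(1+f^\sigma(s_0))}>\frac{N-1}{\delta}$. Writing
$\nabla f^\sigma(s_0)\cdot n(s_0) = \int_\Sigma \frac{2(y-s_0)\cdot n(s_0)}{\sigma^2}\,h^\sigma(s_0-y)\,d\cH^k(y)$ and splitting over $B(s_0,R)$ and its complement (with $R$ of order $\delta+\sigma\sqrt{|\ln\sigma|}$), the far part is superpolynomially small in $\sigma$ by Gaussian decay. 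For the near part, the $C^2$ smoothness of $\partial\Omega(t^*)$ together with the tangency at $s_0$ of the inner ball $B(p,\delta)\subset\Omega(t^*)^-$ yields $(y-s_0)\cdot n(s_0)\geq -C|y-s_0|^2$ for $y\in\Sigma\cap B(s_0,R)$, while the dominant positive contribution concentrates near $p$ and reproduces, up to the local $\cH^k$-density of $\Sigma$ at $p$, the asymptotic behavior computed in Lemma~\ref{claim:minimal_distance}: the relevant ratio grows like $|\ln\sigma|$ and eventually exceeds $N-1$, contradicting \eqref{eq:velocity_nonneg_plan}.

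The technical core, and the main obstacle, is precisely this last lower bound on $\nabla f^\sigma(s_0)\cdot n(s_0)$ for a $k$-rectifiable obstacle: one must simultaneously absorb the possibly negative contributions from portions of $\Sigma$ transversal to $n(s_0)$ (using the uniform constraint $|y-s_0|\geq\delta$ on $\Sigma$ and Gaussian decay) and exploit the fact that $\Sigma$ lies on the interior side of $\partial\Omega(t^*)$ to secure the correct sign of the near contribution up to a controlled second-order error. A final point is that since the flow is smooth on the compact interval $[0,T]$, the principal curvatures of $\partial\Omega(t)$ and the relevant local densities of $\cH^k\res\Sigma$ are uniformly bounded, so a single choice of $\sigma$ makes the argument work throughout $[0,T]$.
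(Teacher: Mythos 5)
Your overall architecture coincides with the paper's: interpret the evolution as the forced flow \eqref{eq:forced_mean_curvature_flow}, localize at a (first) time and point where the distance $\delta$ is achieved, invoke Lemma~\ref{lem:mean_curvature_control} to bound $H_{\Omega(t^*)}(s_0)\le (N-1)/\delta$, show that the forcing term dominates so that $V_n(s_0)<0$, and upgrade the time-dependent choice of $\sigma$ to a uniform one on $[0,T]$ by smoothness and compactness. Your setup of the first contact time and the derivation of $V_n(s_0,t^*)\ge 0$ is in fact tidier than the paper's, which instead argues separately that $\Sigma$ cannot exit $\Omega(t)^-$.

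The genuine gap is in the step you yourself flag as the ``technical core'': you never establish that $\frac{\nabla f^\sigma(s_0)\cdot n(s_0)}{2(1+f^\sigma(s_0))}>\frac{N-1}{\delta}$, and the mechanism you announce for it (``for $\sigma$ small enough \dots the relevant ratio grows like $|\ln\sigma|$'') has the $\sigma$-dependence backwards. At a fixed contact distance $\delta>0$ every $y\in\Sigma$ satisfies $|y-s_0|\ge\delta$, so $|\nabla f^\sigma(s_0)|\le \frac{2}{\sigma^{N+2}}\int_\Sigma |y-s_0|\,e^{-|y-s_0|^2/\sigma^2}\,d\cH^k(y)\le C\,\sigma^{-N-1}e^{-\delta^2/(2\sigma^2)}$ decays superpolynomially as $\sigma\to0$, while $1+f^\sigma(s_0)\to 1$; the quotient therefore tends to $0$, not to something exceeding $(N-1)/\delta$. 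The logarithmic growth exhibited in Lemma~\ref{claim:minimal_distance} occurs only on the window $\delta\in I_\sigma$, i.e.\ $\delta\sim\sigma\sqrt{|\ln\sigma|}$, so the argument must select $\sigma$ coupled to $\delta$ (and then verify both that $\delta$ lies in the active window and that, for a merely $k$-rectifiable $\Sigma$, the near-field mass of $\cH^k\res\Sigma$ around $p$ actually delivers the required positive contribution against the transversal cancellations you mention), rather than send $\sigma\to0$. To be fair, the paper's own proof rests on the equally unjustified assertion $\lim_{\sigma\to0}\nabla f^\sigma(s_t)\cdot n(s_t)/f^\sigma(s_t)=+\infty$, whose denominator is $f^\sigma$ rather than the relevant $1+f^\sigma$; so you have faithfully reproduced, rather than filled, the main lacuna, but as a self-contained argument your proposal does not close.
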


\begin{proof}
Let $t >0$ be such that $\operatorname{dist}(\partial \Omega(t), \Sigma) = \delta$. Thanks to Lemma \ref{lem:mean_curvature_control} and the definition of the forced mean curvature flow \eqref{eq:forced_mean_curvature_flow}, for every $s_t \in \partial \Omega(t)$ satisfying $\operatorname{dist}(\partial \Omega(t), \Sigma) = \operatorname{dist}(s_t, \Sigma) = \delta$, we have that 
$$
V_n(s_t) = H_{\Omega(t)}(s_t) - \frac{\nabla f^\sigma(s_t) \cdot n(s_t)}{2(1+ f^\sigma(s_t))} \leq \frac{N-1}{\delta} - \frac{\nabla f^\sigma(s_t) \cdot n(s_y)}{2(1+ f^\sigma(s_t))},
$$
Assume that $\Sigma \subset \Omega(t)^{-}$. Then
%
\[
\lim_{\sigma\to 0} \frac{\nabla f^\sigma (s_t) \cdot n(s_t)}{f^\sigma(s_t)} =+\infty,
\]
therefore, for $\sigma_t>0$ small enough, we get that 
\[
V_n(s_t) \leq  \frac{N-1}{\delta} - \frac{\nabla f^{\sigma_t}(s_t) \cdot n(s_t)}{2(1+ f^{\sigma_t}(s_t))} < 0.
\]
By contradiction, assume now that $\Sigma \cap (\Omega(t)^{-})^c$ is not empty. By the regularity theory, the flow is continuous in space so there exists an earlier time $0\leq t'<t$ such that 
$\Sigma \subset \Omega(t')^{-}$ and $\operatorname{dist}(\partial \Omega(t'), \Sigma) = \dist(s',\Sigma)=\delta$ with $s'\in \partial \Omega(t')$. By the above argument, $V_n(s')<0$ for a suitable $\sigma_{t'}$ so the distance with $\Sigma$ cannot decrease, which contradicts the original assumption. It follows that, for every $t\in[0,T]$ there exists $\sigma_t>0$ and a closest point $s_t$ of $\partial\Omega(t)$ to $\Sigma$ such that $V_n(s_t)<0$. Using the smoothness of $\frac{\nabla f^{\sigma_t}(s) \cdot n(s)}{2(1+ f^{\sigma_t}(s))}$ on a suitable open set containing $\Sigma$ and strictly contained in $\Omega(t)^-$ for all $t$, we deduce the existence of $\sigma>0$ such that $V_n(s_t)<0$ for every $t \in [0,T]$, hence, the distance between $\partial \Omega$ and $\Sigma$ is bounded below by $\delta$, which completes the proof.
\end{proof}


\section{The skeleton and its approximation}

There is a rich literature on {\it medial axes}, {\it skeletons} and {\it cut locus} of sets with various definitions. For it is more suitable for what we need later, we define the skeleton as follows (as in~\cite{AmbrosioNotes2000} where it is called the medial axis):
\begin{Def}
The \emph{skeleton} of $\Omega \subset\bR^N$ is the singular set $\operatorname{S}_\Omega$ of $d_E$ defined as
\[\operatorname{S}_\Omega=\{x\in \bR^N, \; \dist(x,\Omega) \text{ is not differentiable at }x\}\]
\label{def:skeleton}
\end{Def}
\begin{rmk}
It follows that $\nabla \dist(x,\Omega)$ is well defined on $\bR^N\setminus \operatorname{S}_\Omega$ and takes values in $\bS^{N-1}$.
In addition, $x \in \operatorname{S}_\Omega\setminus\partial \Omega$ if and only if there exist at least two distinct points $y_1, y_2 \in \partial \Omega$ such that $|x- y_1| = |x- y_2|$.
\end{rmk}

The following regularity properties of the skeleton are proved in~\cite{MennucciMantegazza} for the distance function to a closed, nonempty set, and easily extend to the signed distance function $\dist(x,\Omega)$ associated with a general set $\Omega \subset\bR^N$ with nonempty boundary.
\begin{prop}
If $\Omega \subset \bR^N$ has nonempty boundary then
\begin{itemize}
\item $\operatorname{S}_\Omega \setminus \partial \Omega$ is $C^{2}-(N-1)$--rectifiable.
\item If, in addition, $\partial \Omega$ is of class $C^r$ with $r\geq 3$ then the closure $\overline{\operatorname{S}_\Omega}$ of $\operatorname{S}_\Omega$ is $C^{r-2}-(N-1)$--rectifiable. 
In particular,
\begin{itemize}
\item the Hausdorff dimension of $\overline{\operatorname{S}_\Omega}$ is at most $(N-1)$.
\item the vector field $\nabla \dist(x,\Omega)$ belongs to the space $\SBV_{\!\text{\small loc}}(\bR^N)$.
\end{itemize}
\end{itemize}
\end{prop}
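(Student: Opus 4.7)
The plan is to reduce the claim to the corresponding statements for the unsigned distance function to the closed nonempty set $\partial\Omega$, which is the setting of \cite{MennucciMantegazza}. On $\bR^N\setminus\partial\Omega$ one has $\dist(\cdot,\Omega)=-d_E(\cdot,\partial\Omega)$ in the interior of $\Omega$ and $\dist(\cdot,\Omega)=d_E(\cdot,\partial\Omega)$ on $\bR^N\setminus\overline\Omega$, so differentiability of $\dist(\cdot,\Omega)$ at a point $x\notin\partial\Omega$ is equivalent to differentiability of $d_E(\cdot,\partial\Omega)$ at $x$. Consequently,
\[
\operatorname{S}_\Omega\setminus\partial\Omega \;=\; \operatorname{Sing}\bigl(d_E(\cdot,\partial\Omega)\bigr)\setminus\partial\Omega,
\]
and the first bullet follows directly from the $C^2$-$(N-1)$-rectifiability of the singular set of $d_E(\cdot,\partial\Omega)$ outside $\partial\Omega$ established in \cite{MennucciMantegazza}.

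For the second bullet, I would invoke the tubular neighborhood theorem: when $\partial\Omega$ is of class $C^r$ with $r\geq 3$, the signed distance $\dist(\cdot,\Omega)$ is itself of class $C^r$ on an open tubular neighborhood $U$ of $\partial\Omega$, so $\operatorname{S}_\Omega\cap U=\emptyset$ and in particular $\partial\Omega\cap\overline{\operatorname{S}_\Omega}=\emptyset$. The displayed identity then upgrades to $\overline{\operatorname{S}_\Omega}=\overline{\operatorname{Sing}(d_E(\cdot,\partial\Omega))}\setminus\partial\Omega$, and the second statement of \cite{MennucciMantegazza} applied to the $C^r$ set $\partial\Omega$ yields the $C^{r-2}$-$(N-1)$-rectifiability of $\overline{\operatorname{S}_\Omega}$. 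The Hausdorff dimension bound is then immediate, since a $C^{r-2}$-$(N-1)$-rectifiable set is a countable union of $C^{r-2}$ images of subsets of $\bR^{N-1}$ and thus has Hausdorff dimension at most $N-1$.

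For the $\SBV$ statement, I would argue as follows. The vector field $n:=\nabla\dist(\cdot,\Omega)$ is bounded by $1$ almost everywhere, and of class $C^{r-1}$ on the open set $\bR^N\setminus\overline{\operatorname{S}_\Omega}$; in particular, away from the skeleton it has a smooth absolutely continuous derivative. The discontinuity set of $n$ is contained in $\overline{\operatorname{S}_\Omega}$, which is $(N-1)$-rectifiable with locally finite $\cH^{N-1}$ measure on compact subsets. At $\cH^{N-1}$-almost every point of $\overline{\operatorname{S}_\Omega}$, two distinct nearest-point projections onto $\partial\Omega$ give rise to two well-defined one-sided traces of $n$, producing a jump part concentrated on a rectifiable set. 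A standard SBV criterion (boundedness, a jump set of locally finite $\cH^{N-1}$ measure, and the vanishing of the Cantor part) then yields $n\in\SBV_{\!\text{\small loc}}(\bR^N,\bR^N)$.

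The main obstacle I anticipate is precisely this last point: the rectifiability bullets and the dimension estimate transfer almost mechanically from \cite{MennucciMantegazza}, but ruling out a Cantor part in $Dn$ requires combining the Lipschitz structure of the nearest-point projection away from $\overline{\operatorname{S}_\Omega}$ with the local $\cH^{N-1}$-finiteness of the skeleton and the existence of one-sided traces of $n$ on both sides of the rectifiable set $\overline{\operatorname{S}_\Omega}$.
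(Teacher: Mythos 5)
Your reduction to the unsigned distance $d_E(\cdot,\partial\Omega)$ is exactly the route the paper takes: the paper offers no proof of this proposition beyond citing \cite{MennucciMantegazza} and asserting that the results ``easily extend'' to the signed distance, and your identification of $\operatorname{S}_\Omega\setminus\partial\Omega$ with the singular set of $d_E(\cdot,\partial\Omega)$ away from $\partial\Omega$, together with the tubular-neighbourhood argument separating $\overline{\operatorname{S}_\Omega}$ from a smooth $\partial\Omega$, is precisely the intended content of that remark. The one step where your argument, as written, would not close is the $\SBV$ claim: boundedness of $n=\nabla \dist(\cdot,\Omega)$ together with a rectifiable discontinuity set is not a criterion for membership in $\BV$, let alone $\SBV$ (consider $\sin(1/x)$ in one dimension, which is bounded and smooth off a point). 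You must first establish $n\in \BV_{\!\text{\small loc}}$, and this comes from the local semiconcavity of $d_E(\cdot,\partial\Omega)$ outside $\partial\Omega$ (so that $n$ is locally the gradient of a concave function plus a smooth one, hence locally $\BV$), combined with the $C^{r-1}$ regularity of $n$ across $\partial\Omega$. Once $n\in \BV_{\!\text{\small loc}}$ is known, the singular part of $Dn$ is concentrated on the closed set $\overline{\operatorname{S}_\Omega}$ off which $n$ is $C^1$, and the Cantor part vanishes because $|D^c n|$ charges no set that is $\sigma$-finite with respect to $\cH^{N-1}$; this $\sigma$-finiteness already follows from the $(N-1)$-rectifiability established in the second bullet, whereas the ``locally finite $\cH^{N-1}$ measure'' you invoke is a strictly stronger property that rectifiability alone does not provide and that the argument does not require.
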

\begin{rmk} Given a general set  $\Omega \subset \bR^N$ with nonempty boundary, $\overline{\operatorname{S}_\Omega}$ is not $(N-1)$-rectifiable in general. There is an example in~\cite{MennucciMantegazza} of a convex open set in $\bR^2$ with $C^{1,1}$ boundary such that the closure of its skeleton has positive Lebesgue measure.
\end{rmk}
The boundary of $\Omega$ is the set of points where $\dist(x,\Omega)$ vanishes. Is there a similar way to link the skeleton of $\Omega$ with the (complementary of the) support of a map defined either with $\dist(x,\Omega)$ or $\nabla \dist(x,\Omega)$? There is no easy way to do it directly, but the skeleton can be obtained as (a subset of) the support of the distributional limit of a sequence of well defined approximate maps. 

There are however essentially two difficulties with the skeleton:
\begin{itemize}
\item A smooth shape may have a nonsmooth, and possibly very nonsmooth skeleton~\cite{MennucciMantegazza}. 
\item There is no continuous dependence of the skeleton of a set $\Omega$ with respect to perturbations of $\Omega$. A disk in $\bR^2$ has its center as skeleton but it is easy to design arbitrarily small perturbations of the disk's boundary to get a collection of skeletons with Hausdorff dimension $1$ and whose Hausdorff distance to the disk's center is bounded from below by a positive number.
\end{itemize}
We shall see however that using a skeleton-based term yields an interesting and efficient model for the numerical approximation of the mean curvature flow of thin structures.

The purpose of the next section is to provide a constructive formula for approximating a weighted Hausdorff measure supported on the skeleton $\Sigma = \operatorname{S}_{\Omega}$ of a set $\Omega$. 

\subsection{Skeleton approximation with a constructive formula} 
The vector field $ n (x) = \nabla \dist(x,\Omega)$ satisfies the equality 
$|n|^2 =1$ at every point where is it defined. In particular, if $n$ is regular at $x$, then 
$$
S n(x) :=  \langle \nabla n^T(x) n(x) , n(x) \rangle = 0,
$$
where $\nabla n^T$ is the transpose of the matrix $\nabla n$ and $\langle \cdot,\cdot \rangle$ is the inner product. Of course, this term cannot be defined if $x$ is on the skeleton $\Sigma = S_{\Omega}$, but a suitable smoothing will charge the support of $\Sigma$. The term defines above gives rise to the following definition.
\begin{Def}
For every $ n \in SBV(\bR^N,\bR^N)$, we define the  \emph{skeletal term} $S n$ of $ n$ as
$$
S  n
= \langle n ,(\nabla n )^T n \rangle.$$
\label{def:jump_term}
\end{Def}
The idea is then to look at a regularized version of this term by applying this expression to the vector field $n^{\sigma} = n *h^{\sigma}$ corresponding to the convolution of $n$ with the kernel $h^{\sigma} = \frac{1}{\sigma^N} h\left(\frac{\cdot}{\sigma} \right)$ 
where $h \in C^\infty_c (\bR^N,\bR^+)$  and $\int_{\bR^N} h dx = 1$.

\begin{thm}
Let $ n \in SBV(\bR^N, \bS^{N-1})$ with $C^1-$ jump set $\Sigma$ oriented $\cH^{N-1}$-a.e. by the unit vector $\nu$.
Then \begin{equation}
S  n^\sigma  \to 
\frac{1}{12} |[ n]|^2 \langle [ n], \nu \rangle \, \cH^{N-1}\res\Sigma
\quad\text{ in } \cD'(\bR^N) 
\quad\text{ as } \sigma \to 0
,
\label{eq:jump_term_convergence}
\end{equation}
 where $[n]: = n^+ - n^-$ with  $n^+, n^-$ the approximate limits of $n$ with respect to $\nu$ on $\Sigma$, i.e.
  $$ 
  n^\pm (x) := \lim_{r\to 0} \fint_{B^\pm(x, r)} u(y) \, dy
  $$
where $B^+(x, r) = \{ y \in \bR^N \, | \, \langle y, \nu(x) \rangle > 0 \}$ and 
$B^-(x, r) = \{ y \in \bR^N \, | \, \langle y, \nu(x) \rangle < 0 \}$. 

\label{thm:jump_term}
\end{thm}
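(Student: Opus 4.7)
The plan is to reduce $\int S n^\sigma\,\varphi\,dx$ for test functions $\varphi\in C_c^\infty(\bR^N)$ to a one-dimensional computation along the normal to $\Sigma$ via a blow-up argument. The starting point is the algebraic identity $(\nabla n)^T n = \tfrac{1}{2}\nabla|n|^2$, which gives
\[
Sn^\sigma \;=\; \tfrac{1}{2}\, n^\sigma \cdot \nabla|n^\sigma|^2.
\]
Since $|n|^2=1$ a.e., Jensen's inequality yields $|n^\sigma|^2\le 1$ with equality outside the $\sigma$-neighborhood of $\Sigma$, which confines $Sn^\sigma$ to that tube. Using a partition of unity along the $C^1$ jump set $\Sigma$ and a $C^1$-chart straightening $\Sigma$ locally into $\{y_N=0\}$ with $\nu(x_0)=e_N$, the problem reduces to the flat model in which $n\equiv n^+$ for $y_N>0$ and $n\equiv n^-$ for $y_N<0$, modulo errors coming from the ambient Jacobian and from the absolutely continuous part of $\nabla n$.

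In this flat model, writing $\bar h(s):=\int_{\bR^{N-1}} h(y',s)\,dy'$ and $\theta(t):=\int_{-\infty}^t \bar h(s)\,ds$, the convolution evaluates explicitly to
\[
n^\sigma(y) \;=\; n^- + [n]\,\theta(y_N/\sigma), \qquad \nabla n^\sigma(y) \;=\; \tfrac{1}{\sigma}\,\bar h(y_N/\sigma)\,[n]\otimes e_N,
\]
so that
\[
Sn^\sigma(y) \;=\; \tfrac{1}{\sigma}\,\bar h(y_N/\sigma)\,\langle [n],n^\sigma(y)\rangle\,\langle \nu,n^\sigma(y)\rangle.
\]
The key algebraic input is the unit-length constraint $|n^\pm|^2=1$, which forces $\langle [n], n^-+n^+\rangle=0$, hence $\langle [n],n^\sigma(y)\rangle=|[n]|^2\,(\theta(y_N/\sigma)-\tfrac12)$. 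Testing against $\varphi$ and rescaling $y_N=\sigma t$, the integral factors in the limit $\sigma\to 0$ as $\int_{\bR^{N-1}}\varphi(y',0)\,dy'$ multiplied by the two one-dimensional moments
\[
I_1=\int_{\bR}\bar h(t)\,(\theta-\tfrac12)\,dt, \qquad I_2=\int_{\bR}\bar h(t)\,(\theta-\tfrac12)\,\theta\,dt.
\]
Using $\theta'=\bar h$ and the boundary values $\theta(\pm\infty)=\tfrac{1\pm1}{2}$, a boundary evaluation yields $I_1=0$ and hence cancels the cross term $\langle\nu,n^-\rangle$; the change of variable $u=\theta$ gives $I_2=\int_0^1(u^2-u/2)\,du=\tfrac1{12}$. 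Multiplying out, one recognises the limit as $\tfrac{1}{12}|[n]|^2\langle [n],\nu\rangle\,\cH^{N-1}\res\Sigma$.

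The hard part will be justifying the blow-up rigorously: one must control the tangential variation of $\nu$, $[n]$ and $\varphi$ along $\Sigma$ and, above all, absorb the contribution of the absolutely continuous part of $\nabla n$. The $C^1$-regularity of the jump set ensures that, in the rescaled variable $t=y_N/\sigma$, the base point and the normal direction vary Lipschitz-continuously over distances $O(\sigma)$, so that the curved Jacobian tends to $1$ and the tangential modulation produces only $o(1)$ corrections once integrated against $\varphi$. Similarly, the a.c.\ gradient contributes a pointwise $O(1)$ term to $Sn^\sigma$, supported in a tube of width $O(\sigma)$, and therefore an $O(\sigma)$ contribution to the total integral that vanishes in the limit. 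Reassembling the localized flat-model limits through the partition of unity yields the desired distributional convergence \eqref{eq:jump_term_convergence}.
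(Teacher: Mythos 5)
Your strategy coincides with the paper's: split $Dn$ into its absolutely continuous and jump parts, argue that the a.c.\ part contributes nothing in the limit because $\sum_i n_i\partial_j^A n_i=\tfrac12\partial_j^A|n|^2=0$ a.e., and reduce the jump contribution by a normal blow-up to a one-dimensional moment computation across $\Sigma$. Your flat-model computation is correct and in fact cleaner than the paper's: the identity $\langle [n],n^++n^-\rangle=0$ together with the substitution $u=\theta$ gives $I_1=0$ and $I_2=\tfrac1{12}$ directly, whereas the paper reaches the same constants through the pointwise limits \eqref{eq:g_sigma_limit} and \eqref{eq:n_sigma_limit} and the identity $\sum_i(1-n_i^+n_i^-)=|[n]|^2/2$; one checks that $\int_{\bR} Gg(2G-1)\,dz=2I_2$ and $\int_{\bR}(1-G)g(2G-1)\,dz=2(I_1-I_2)$, so these are the same moments in disguise.

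The one step that is wrong as written is your disposal of the absolutely continuous part. Equality in Jensen's inequality requires $n$ to be a.e.\ constant on the ball of radius $\sim\sigma$, not merely jump-free there; since $\nabla^A n$ need not vanish, $|n^\sigma|<1$ can occur arbitrarily far from $\Sigma$, so $Sn^\sigma$ is \emph{not} confined to the $\sigma$-tube, and the later claim that the a.c.\ gradient produces ``a pointwise $O(1)$ term supported in a tube of width $O(\sigma)$'' fails for the same reason (and also because $\nabla^A n$ is only in $L^1$, not $L^\infty$). The correct argument --- the one the paper uses for $I_1^\sigma$ --- is that $h^\sigma*\partial_j^A n_i\to\partial_j^A n_i$ a.e., the integrand is dominated by $C|\nabla^A n|\in L^1_{\text{loc}}$, and dominated convergence sends the a.c.\ contribution to $\int\sum_{i,j} n_i\,\partial_j^A n_i\, n_j\,\varphi\,dx=0$. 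With that repair, and with the Lebesgue-point argument you already flag as needed to replace $n$ by its one-sided traces $n^\pm$ in the blow-up (the analogue of \eqref{pf8:jump_term}), your proof goes through.
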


\begin{rmk}
 For every $u \in SBV(\bR^N)$, $Du$ is a Radon measure that can be written as the sum of an absolutely continuous part and a singular part with respect to the Lebesgue measure, i.e,
$$
Du = D^A u + D^J u.
,
$$
where $D^A u = \nabla^A u\, dx$ with $\nabla^A u$ the Radon-Nikodym derivative of $Du$ with respect to the Lebesgue measure $\cL^N$ in $\bR^N$, and $D^Ju=[u]\nu\cH^{N-1}\res\Sigma$ with $\Sigma$ the jump set of $u$, $\nu$ a unit normal and $[u]$ the jump of $u$ on $\Sigma$, see~\cite{AmbrosioFuscoPallara2000}. Recall also that $u$ is approximately differentiable a.e. and its approximate differential coincides with $\nabla^A u$ a.e. in $\Omega$.
\end{rmk}

\begin{proof}
Since $  n^\sigma = h^\sigma * n$, 
we have that 
$$
\begin{aligned}
\partial_j   n^\sigma_i = h^\sigma * \partial_j  n_i
= h^\sigma * \partial_j^A  n_i +  h^\sigma * \partial_j^J  n_i
=
h^\sigma * \partial_j^A  n_i +
h^\sigma * [  n_i ] \nu_j \, \cH^{N-1}\res\Sigma.
\end{aligned}
$$
Therefore, for every function with compact support $\varphi \in C^\infty_c (\bR^N)$, we can write that
$$
\sum_{i, j} \int_{\bR^N} n^\sigma_i \partial_j  n^\sigma_i  n^\sigma_j \varphi dx 
= I_1^\sigma + I_2^\sigma
,
$$
where 
\begin{equation}
  I_1^\sigma = \sum_{i, j}\int_{\bR^N}   n_i^{\sigma} (h^\sigma * \partial_j^A   n_i ) n^\sigma_j \varphi \, dx\;\;
\text{ and }\;\;
I_2^\sigma = \sum_{i, j}\int_{\bR^N}   n^\sigma_i g^\sigma_{ij}   n^\sigma_j \varphi \, dx
\label{pf1:jump_term}
\end{equation}
with 
$$
g^\sigma_{ij} = h^\sigma * [ n_i] \nu_j \, \cH^{N-1}\res\Sigma.
$$
\par
Without loss of generality, we can consider the restriction of these integrals to the set  $Q =\Sigma(1) := \{ x \in \Omega \, | \, d(x, \Sigma) \leq 1 \}$ and assume that the map 
$$
\begin{aligned}
\tau:\Sigma(1) &\to [-1, 1] \times \Sigma
\\
x &\mapsto (r, s)
\end{aligned}
$$
is a diffeomorphism, with $s = \pi_\Sigma(x)$ the normal projection of $x$ on $\Sigma$ and $r = \langle x - s, \nu(s) \rangle$, so that $x = s+ r\nu(s)$, . 
Thanks to the fact that
$$  n^\sigma_i = h^\sigma *   n_i \to  n_i \;\;\text{ a.e.,}\qquad
h^\sigma * \partial_j^A  n_i \to \partial^A_j  n_i  \;\text{ a.e. as } \sigma \to 0,
$$ and $|  n |^2 =\sum_i  n_i^2= 1$, 
we get that, for every $\sigma>0$, $|  n_i^\sigma (h^\sigma * \partial^A_j  n_i ) | \leq C |\partial^A_j  n_i |$ for some $C>0$ and
\begin{equation}
\sum_i  n^\sigma_i  
\left (h^\sigma * \partial_j^A  n_i \right ) \to \sum_i  n_i \partial^A_j  n_i = \frac{1}{2} \partial_j \left (|  n |^2 \right ) = 0  \;\; \text{ a.e. as } \sigma \to 0,
\end{equation} 
which, thanks to Lebesgue's theorem, leads to
\begin{equation}
\lim_{\sigma \to 0}
I^\sigma_1 = \sum_{i, j} \int_Q  n_i \partial_j^A  n_i  n_j \varphi \, dx =0.
\label{pf2:jump_term}
\end{equation}
In order to estimate $I^\sigma_2$ defined in \eqref{pf1:jump_term}, by identifying $x = (r, s)_\tau$ and taking $z=r/\sigma$, we can write that
\begin{equation}
\begin{aligned}
I^\sigma_2 &= 
\sum_{i, j} \int_{-1}^1 \int_\Sigma  n_i^\sigma (r, s) g^\sigma_{ij} (r, s)  n_j^\sigma (r, s) \varphi(r, s) J_\Sigma(r, s) \, ds \, dr
\\
&= \sum_{i, j} \int_{-\infty}^{\infty} \int_\Sigma  n_i^\sigma (\sigma z, s)  g^\sigma_{ij} (\sigma z, s)  n_j^\sigma (\sigma z, s) \varphi(\sigma z, s) \chi_{[-1/\sigma, 1/\sigma]}(z)\sigma J_\Sigma (\sigma z, s) \, ds \, dz
,
\end{aligned}
\label{eq:I_2}
\end{equation}
where $\chi_{[-1/\sigma, 1/\sigma]}$ is the characteristic function of $[-1/\sigma, 1/\sigma]$ and $J_\Sigma = |\mdet \left( \partial_{(r, s)} \tau^{-1} (r, s)_\tau\right)|$ is the Jacobian of $\tau^{-1}$.
\par 
We are now at the point of studying the pointwise limit of the integrand function as $\sigma$ goes to $0$.
Let $(\sigma z_0, s_0)_\tau \in [-1, 1] \times \Sigma$, we have that
\begin{equation}
\sigma g_{ij}^\sigma (\sigma z_0, s_0)_\tau
= \int_{\Sigma} \sigma h^\sigma((\sigma z_0, s_0)_\tau - (0, s)_\tau ) [ n_i](s) \nu_j (s) ds.
\label{pf3:jump_term}
\end{equation}
As $\Sigma$ is $C^1$, we can parametrize locally $\Sigma$ as a graph around $s_0$. More precisely, up to rotation, we can assume that $\nu(s_0)= (0_{\bR^{N-1}}, 1)$ and then there exist a neighbourhood $V(s_0)$ of $s_0$ and $\delta > 0$ such that for all $s \in V(s_0)$, we can write that
\begin{equation}
s= s(u) = s_0 + (u, \Psi(u)) 
,
\label{pf4:jump_term}
\end{equation}
where $u\in B^{N-1}(0, \delta)$ and $\Psi: B^{N-1}(0, \delta) \to \bR$ is a $C^1$-map such that $\Psi(0)= | \nabla \Psi(0) |= 0$. For $\sigma>0$ small enough, \eqref{pf3:jump_term} can thus be written as 
\begin{equation}
\begin{aligned}
\sigma g^\sigma_{ij}(\sigma z_0, s_0) &= 
\int_{B^{N-1}(0, \delta)} \sigma h^\sigma\left( (\sigma z_0, s_0)_\tau - (0, s(u))_\tau \right) [n_i]\nu_j \left( s(u) \right) \sqrt{ 1 + |\nabla \Psi(u)|^2}\, du+o(1)
\\
&= \int_{B^{N-1}(0, \delta/\sigma)} h\left(\frac{(\sigma z_0, s_0)_\tau - (0, s(\sigma u))_\tau }{\sigma} \right) [ n_i] \nu_j (s(\sigma u)) \sqrt{1 + |\nabla \Psi(\sigma u)|^2}\,  du+o(1)
,
\end{aligned}
\label{pf5:jump_term}
\end{equation}
where the second equality is obtained by replacing $u$ with $\sigma u$.
Moreover we have that 
\begin{equation}
\begin{aligned}\frac{(\sigma z_0, s_0)_\tau - (0, s(\sigma u))_\tau }{\sigma} &= 
\frac{s_0 + (0_{\bR^{N-1}}, \sigma z_0 ) - s_0 - (\sigma u, \Psi(\sigma u)) }{\sigma} 
\\
&= \left (-u, z_0 - \frac{\Psi(\sigma u)}{\sigma} \right )
\to (-u, z_0) \text{ as } \sigma \to 0.
\end{aligned}
\label{pf6:jump_term}
\end{equation}
Together with \eqref{pf5:jump_term} and \eqref{pf6:jump_term},  we get that
\begin{equation}
\sigma g_{ij}^\sigma (\sigma z_0, s_0) \to g(z_0) [ n_i](s_0) \nu_j (s_0)
 \text{ as } \sigma \to 0,
\label{eq:g_sigma_limit}
\end{equation}
where $g(z_0) = \int_{\bR^{N-1}} h(-u, z_0) du = \int_{\bR^{N-1}} h(u, z_0)\,  du$.
\par
By analogy, for $\sigma$ small enough, we also have
\begin{equation}
\begin{aligned}
  n_i^\sigma (\sigma z_0, s_0) &=
\int_{-1/\sigma}^{1/\sigma} \int_{\Sigma}
h^\sigma \left( (\sigma z_0, s_0)_\tau - (\sigma z, s)_\tau \right)  n_i (\sigma z, s)_\tau \sigma J_\Sigma (\sigma z, s)_\tau \, ds \, dz
\\
&=
\int_{-1/\sigma}^{1/\sigma} \int_{B^{N-1}(0, \delta/\sigma)}
h\left( \frac{(\sigma z_0, s_0)_\tau - (\sigma z, s(\sigma u))_\tau}{\sigma} \right)
\\
&\times  n_i (\sigma z, s(\sigma u))_\tau J_\Sigma (\sigma z, s(\sigma u)_\tau ) \sqrt{1 + | \nabla \Psi(\sigma u) |^2} \, du \, dz
\end{aligned}
\label{pf7:jump_term}
\end{equation}
Moreover, as $\sigma \to 0$,
\begin{equation}
\begin{aligned}
  n_i (\sigma z, s(\sigma u))_\tau &\to 
\left \{
\begin{aligned}
 n_i^+ (s_0) &\text{ if } z >0,
\\
 n_i^- (s_0) &\text{ if } z < 0,
\end{aligned}
\right .
\\
J_\Sigma (\sigma z, s(\sigma u))_\tau &\to 1
\text{ and }
\\
\frac{(\sigma z_0, s_0)_\tau - (\sigma z, s(\sigma u))_\tau}{\sigma} &\to \left( -u, z_0 - z  \right).
\end{aligned}
\label{pf8:jump_term}
\end{equation}
Notice that the second limit is due to the fact that $J_\Sigma(r, s)_\tau =  1 + H_\Sigma (s)r + o(r)$, see~\cite{LeonSimon}.
\par 
Thanks to \eqref{pf7:jump_term} and \eqref{pf8:jump_term}, we get that
\begin{equation}
\begin{aligned}
 n_i (\sigma z_0, s_0) &\to 
\int_0^{+\infty} \int_{\bR^{N-1}} h(-u, z_0-z)  n_i^+(s_0)\,  du \, dz +
\int_{-\infty}^0 \int_{\bR^{N-1}} h(-u, z_0 - z)  n_i^- (s_0) \, du \, dz
\\
&=
\left (\int_{-\infty}^{z_0} g(z) \, dz \right )  n_i^+ (s_0)
+
\left ( \int_{z_0}^{+\infty} g(z) \, dz \right )  n_i^- (s_0)
\\
&=G(z_0)  n_i^+(s_0) + \left( 1 - G(z_0) \right)  n_i^- (s_0)
,
\end{aligned}
\label{eq:n_sigma_limit}
\end{equation}
as $\sigma \to 0$,
where $G(z_0) = \int_{-\infty}^{z_0} g(z) dz$.
\par
Lastly, together with \eqref{eq:I_2}, \eqref{eq:g_sigma_limit}, \eqref{pf8:jump_term} and \eqref{eq:n_sigma_limit}, we obtain that
\begin{equation}
\begin{aligned}
\lim_{\sigma \to 0} I_2^\sigma =&
\sum_{i, j} 
\int_\Sigma \int_\bR
\left ( G(z)  n_i^+(s) + \left( 1 - G(z) \right)  n_i^- (s) \right )  (g(z) [ n_i] \nu_j)
\\
&\quad \quad \times
\left ( G(z)  n_j^+(s) + \left( 1 - G(z) \right)  n_j^- (s) \right )\, dz \, \varphi(0, s)_\tau \, ds
\\
=& \sum_{i} 
\int_\Sigma \int_\bR  
G(z) g(z) [ n_i] (G(z)  n^+_i + (1-G(z)) n^-_i)
\langle  n^+, \nu \rangle (s) \varphi(0, s)_\tau \, ds
\\
&+
\sum_i \int_\Sigma \int_\bR
(1 -G(z) )g(z) [ n_i] (G(z)  n^+_i + (1-G(z)) n^-_i)
\langle  n^-, \nu \rangle (s) \varphi(0, s)_\tau \, ds
,
\end{aligned}
\end{equation}
which, 
thanks to the fact that $\sum_i \left( 1 -  n_i^+  n_i^- \right) = \frac{| [ n] |^2}{2}$, leads to
\begin{equation}
\begin{aligned}
\lim_{\sigma \to 0} I_2^\sigma 
=&
\sum_i \int_\Sigma \int_\bR G(z) g(z) (2G(z) -1) (1 -  n_i^+  n_i^i)
\langle  n^+, \nu \rangle (s) \varphi(0, s)_\tau \, ds
\\
&+
\sum_i \int_\Sigma \int_\bR (1-G(z)) g(z) (2G(z) -1) (1 -  n_i^+  n_i^i)
\langle  n^-, \nu \rangle (s) \varphi(0, s)_\tau \, ds
\\
=&
\int_\Sigma \left( \int_\bR G(z) g(z) (2G(z) -1 ) \, dz \right)
\frac{|[ n]|^2}{2}
\langle  n^+, \nu \rangle (s) \varphi(0, s)_\tau \, ds
\\
&+
\int_\Sigma \left( \int_\bR (1-G(z))g(z)(2G(z) -1 )\, dz \right)
\frac{|[ n]|^2}{2}
\langle  n^-, \nu \rangle (s) \varphi(0, s)_\tau \, ds
\\
=&\frac{1}{12} \int_\Sigma |[ n]|^2 \langle [ n], \nu \rangle \varphi(0, s) \, ds
= \left \langle \frac{1}{12} |[ n]|^2 \langle [ n], \nu \rangle \, \cH^{N-1}\res\Sigma, \varphi \right \rangle
.
\end{aligned}
\label{eq:I_2_limit}
\end{equation}
and Theorem \ref{thm:jump_term} ensues.
\end{proof}

\begin{rmk}
In the case where $N=2$ and $ n =  \nabla \dist(x,\Omega)$ is the gradient of the signed distance function of a set $\Omega$,  
 $ n^\perp := 
\begin{pmatrix}
-\partial_{x_2} \dist(x,\Omega) 
\\
\partial_{x_1} \dist(x,\Omega)
\end{pmatrix}$ satisfies 
$$
\nabla \cdot  n^\perp = 0
\text{ in } D'(\Omega),
$$ (or equivalently, $\nabla \times  n =0 \text{ in } D'(\Omega)$). Thanks to the divergence-free hypothesis and the trace theory of functions of bounded variation,
we can write that 
$$
 n^+ = \cos \theta \nu^\perp + \sin \theta \nu
\text{ and }
 n^- = \cos \theta \nu^\perp - \sin \theta \nu
$$
on the jump set $\Sigma( n)$,
where $\theta$ is the angle between $ n^+$ and $\nu^\perp$.
Therefore, we have that $[  n] =  2 \sin \theta  = \langle [ n], \nu \rangle$, which, combined with Theorem \ref{thm:jump_term}, implies that
$$
\lim_{\sigma \to 0} S  n^\sigma = \frac{1}{12} \langle [ n], \nu \rangle^3 \cH^{1}\res\Sigma \text{ in } D'(\Omega).
$$
\end{rmk}
%

\subsection{Numerical illustrations }
We now provide numerical illustrations of the previous theorem for 2D and 3D examples. To do so, we compute numerical approximations of $S  n^\sigma$ on a Cartesian grid  with $ n^\sigma =  h^\sigma *  n$ and $h^{\sigma} = \frac{1}{\sigma^N} e^{-\pi \frac{|x|^2}{\sigma^2}}$ a Gaussian kernel. 

\subsubsection{2D examples}
We provide in Figure \ref{Jump2D} illustrations of the skeletal term for different values of the regularization parameter $\sigma> 0$ and for three 2D sets whose boundaries consist of, respectively,
\begin{itemize}
\item two concentric circles,
\item two parallel lines, 
\item a rectangle.
\end{itemize}
The resolution is $2^7$ nodes for each grid dimension. Clearly, $S n^\sigma$ identifies the (codimension 1) main part of the skeleton $\Sigma$ of $\Omega$, with a weight that depends on the orientation $\nu$ of $\Sigma$. Negligible parts of the skeleton (i.e. of codimension greater than 1) are also detected, for example the centre of the ring is activated.

\par
 \begin{figure}[!htpb]
\centering
 	\includegraphics[width=14cm]{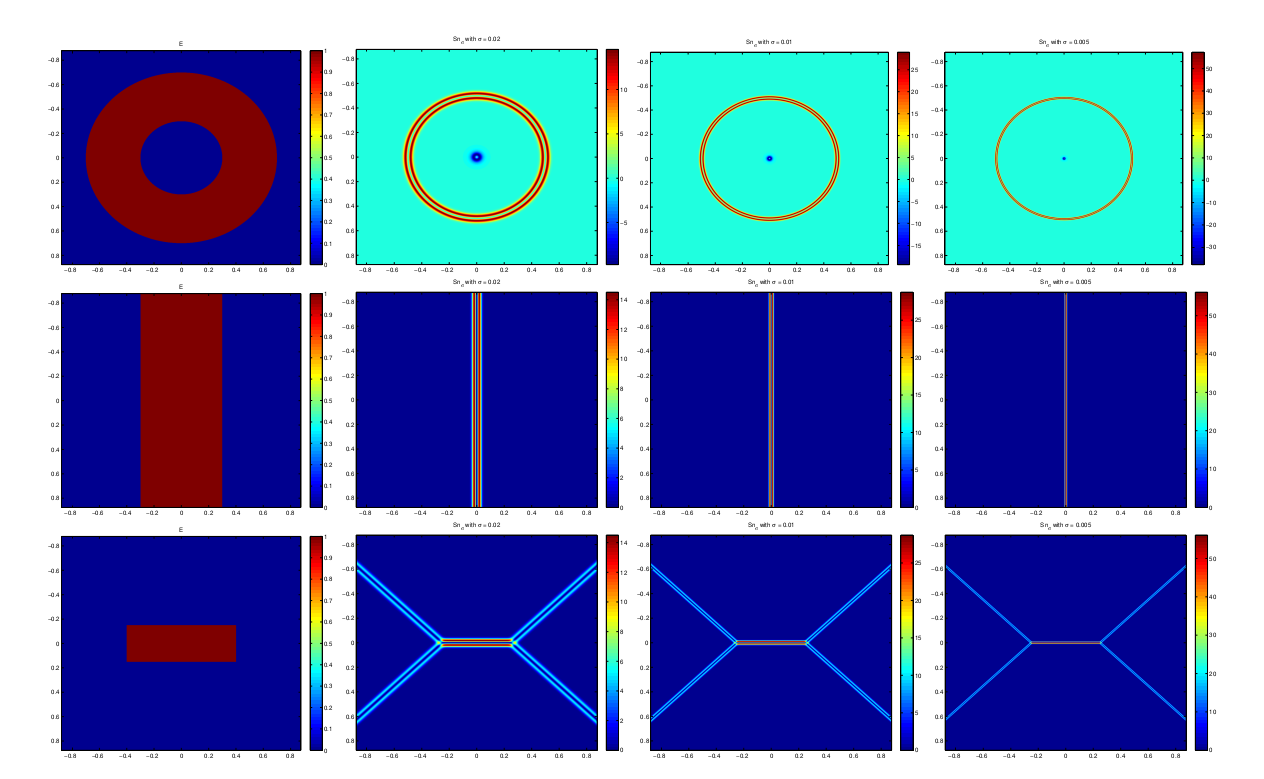} 

\caption{Numerical values of $S  n^\sigma$ in $2D$ for various configurations and various values of $\sigma$;
The first column of each line corresponds to the considered set ; On each line, columns $2$, $3$ and $4$ show the values of the scalar field 
$S  n^\sigma$ for, respectively, $\sigma = 0.02$, $0.01$ and $0.005$.}
\label{Jump2D}
\end{figure}

 \begin{figure}[!htpb]
\centering
		\includegraphics[width=18cm]{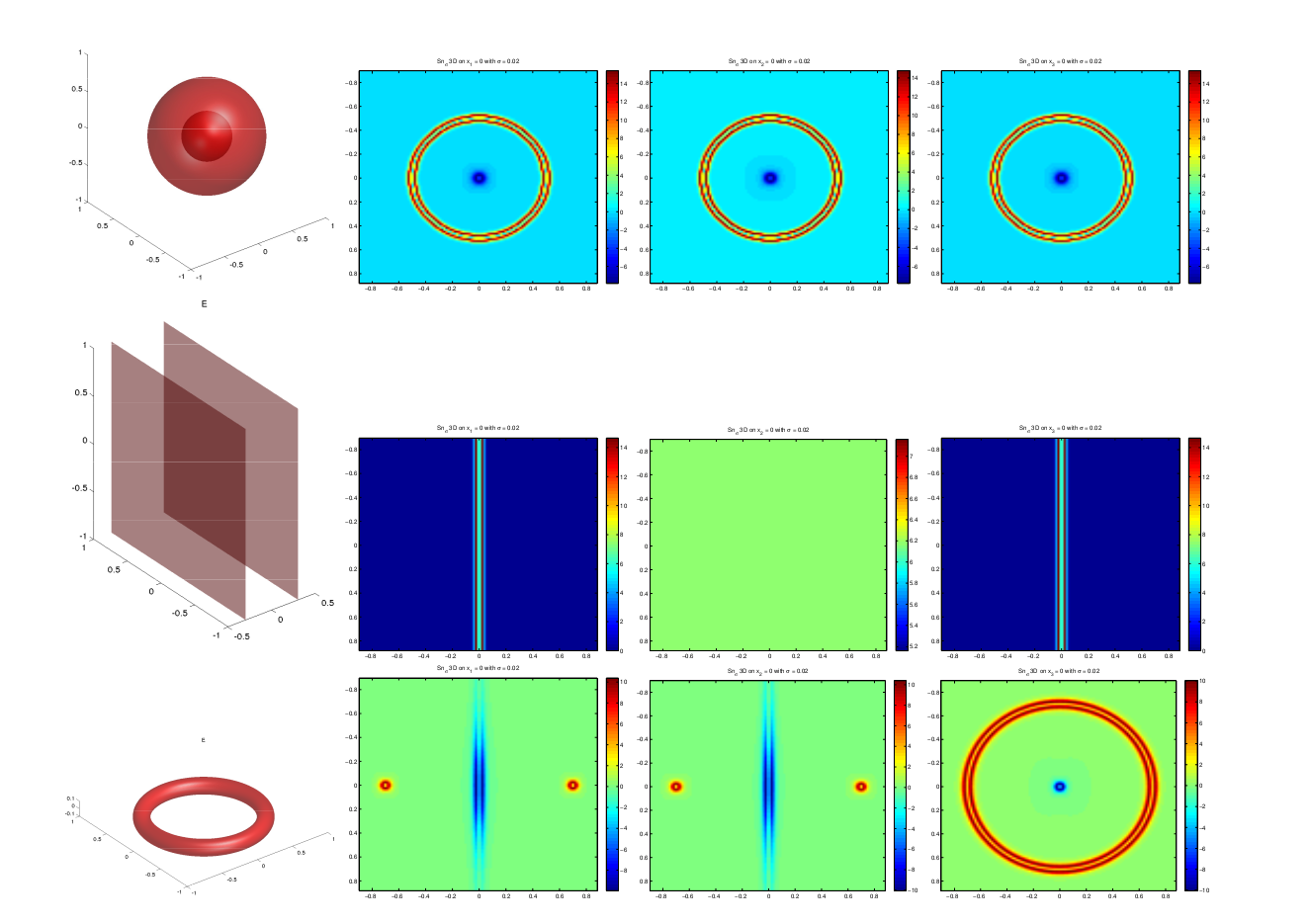} 
\caption{Numerical values of $S  n^\sigma$ for $3D$ examples. Each line shows the considered set, then the values taken by the skeletal term on the planes $\{x_1 = 0\}$, 
$\{x_2 = 0\}$, and $\{x_3 = 0\}$, respectively.}
\label{Jump3D}
\end{figure} 

\subsubsection{3D examples}
We perform a similar test in 3D with a numerical resolution of $2^7$ nodes
for each grid dimension. The examples used for the illustration in Figure~\ref{Jump3D} have as boundaries
\begin{itemize}
\item two concentric spheres, 
\item two parallel planes, 
\item a circular torus.
\end{itemize}
The values taken by the skeletal term are captured on the three hyperplanes $\{x_1 = 0\}$, $\{x_2 = 0\}$ and $\{x_3 = 0\}$. 
 
 \par
 In both figures \ref{Jump2D} and \ref{Jump3D}, one can clearly observe that the skeletons of all considered sets are well localized.
 
\subsection{Asymptotic expansion of $S n^{\sigma}$ with respect to $\sigma$ }
Thanks to Theorem \ref{thm:jump_term}, the skeletal term $S  n^\sigma$ approximates the codimension $1$ jump set of $ n$ weighted by the height of the jump and the angle with the normal to the set. However, $S  n^\sigma$ does not necessarily charge at the limit as $\sigma$ goes to $0$ singular parts of higher codimension. For instance, consider $ n(x) =  - x/|x| \in W^{1, 1}(\bR^2, \bS^1)$.  As shown in the previous section, the skeletal term satisfies
$$
S  n^\sigma \to 0 \text{ in } \cD'(\Omega) \text{ as } \sigma \to 0.
$$
Indeed, since the singular part of $D  n$ vanishes, thanks to \eqref{pf1:jump_term} and \eqref{pf2:jump_term}, we have that for every $\varphi \in C^\infty_c(\Omega)$
$$
\langle S  n^\sigma, \varphi \rangle = I_1^\sigma 
\to 0 \text{ as } \sigma \to 0.
$$

We will focus in this section on two specific examples, a round sphere in $\bR^N$ and a circular torus in $\bR^3$, to show that, at least in these two cases,  higher codimension parts of the skeleton are also activated by $S  n^\sigma$.  From this preliminary analysis
and the previous theorem, we  conjecture that if $ n$ is the gradient of the signed distance function associated with a set $\Omega$,
its skeletal term $S  n^\sigma$ characterizes asymptotically the jump set $\Sigma = S_{\Omega}$, which corresponds to the skeleton of $\Omega$. More precisely, it is tempting to propose the following conjecture:

\begin{conj}
\begin{equation}
{Sn^\sigma\sim \sum_{j=0}^{N-1}{\sigma^j}\,\alpha_j\,{{\mathcal H}^{N-1-j}{\res{\Sigma_j}}}}
\label{eq:asymptotic_jump_term}
\end{equation}
  where 
  \begin{itemize}
 \item $\alpha_j$ are $\bR$-valued density functions;
 \item $\Sigma_j$ are $(N-1-j)$-dimensional sets;
 \item the discontinuity set of $n$ satisfies $\Sigma = S_{\Omega} =\displaystyle\bigcup_j\Sigma_j$
\end{itemize}
\end{conj}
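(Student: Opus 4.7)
The plan is to reduce the problem to a stratified analysis of the skeleton. Assume sufficient regularity on $\partial\Omega$ (say analytic, or generically piecewise smooth) so that $\Sigma = \operatorname{S}_{\Omega}$ decomposes as a disjoint union $\Sigma = \bigsqcup_{j=0}^{N-1}\Sigma_j$, where $\Sigma_j$ is a $C^1$-submanifold of dimension $N-1-j$ consisting of points at which exactly $j+2$ boundary points realize the minimum distance in generic position. The case $j=0$ is exactly Theorem~\ref{thm:jump_term}, which identifies the leading contribution as $\tfrac{1}{12}|[n]|^2\langle[n],\nu\rangle\,\cH^{N-1}\res\Sigma_0$. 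The genuinely new work lies in the subleading terms.

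For $j\geq 1$, the key step is a blow-up analysis in the directions normal to $\Sigma_j$. Fix a generic point $x_0\in\Sigma_j$ and use normal coordinates so that $\Sigma_j$ is identified with $\bR^{N-1-j}\times\{0\}$ near $x_0$ and the normal space is $\{0\}\times\bR^{j+1}$. The self-similar structure of $n=\nabla\dist(\cdot,\Omega)$ near the skeleton yields, for $z$ in the normal space,
\[
n(x_0+\sigma z)\longrightarrow N_{x_0}(z) \qquad \text{as } \sigma\to 0,
\]
where $N_{x_0}$ is a model vector field taking constant unit values on each of the $j+2$ angular sectors based at the origin (one per minimizing boundary projection). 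Convolving with $h^\sigma$ and rescaling $x=x_0+\sigma z$ give $n^\sigma(x_0+\sigma z)\to M_{x_0}(z):=(h*N_{x_0})(z)$ together with $\nabla n^\sigma(x_0+\sigma z)=\sigma^{-1}\nabla M_{x_0}(z)+o(\sigma^{-1})$.

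Plugging these asymptotics into the definition of $Sn^\sigma$ and changing variables by $(s,z)\mapsto s+\sigma z$ with $s\in\Sigma_j$ and $z$ in the normal fiber (so $dx=\sigma^{j+1}\,ds\,dz$ at leading order), the $\sigma^{-1}$ coming from $\nabla n^\sigma$ combines with the Jacobian $\sigma^{j+1}$ to give, for every $\varphi\in C^\infty_c(\bR^N)$,
\[
\int Sn^\sigma\,\varphi\,dx \;\sim\; \sigma^{j}\int_{\Sigma_j}\alpha_j(s)\,\varphi(s)\,d\cH^{N-1-j}(s), \qquad \alpha_j(s)=\int_{\bR^{j+1}}\langle M_s,(\nabla M_s)^T M_s\rangle\,dz.
\]
Summing the contributions over $j=0,\dots,N-1$ produces~\eqref{eq:asymptotic_jump_term}. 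Consistency with the two worked examples is immediate: for a round sphere the only nontrivial stratum is $\Sigma_{N-1}=\{0\}$, yielding a Dirac mass at the centre with weight $\sigma^{N-1}$; for a circular torus in $\bR^3$ the central axis circle is $\Sigma_1$ (codimension $2$), contributing $\sigma\,\alpha_1\,\cH^{1}\res\Sigma_1$.

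The main obstacle is the low regularity of $\Sigma$: the skeleton of a smooth set need not itself be smooth, and different strata may accumulate on one another in intricate ways. Making the stratification rigorous requires sharp results in the spirit of Mantegazza--Mennucci~\cite{MennucciMantegazza}, together with a proof that the blow-up model $N_{x_0}$ is well defined $\cH^{N-1-j}$-a.e.\ on each $\Sigma_j$ and that the densities $\alpha_j(s)$ are finite. A second difficulty is controlling cross-contributions: near $\overline{\Sigma_j}\cap\Sigma_{j'}$ with $j<j'$, the expansion from $\Sigma_{j'}$ can a priori contaminate the leading-order term on $\Sigma_j$, so a uniform-in-$\sigma$ remainder estimate is essential to separate the orders. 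Finally, one expects $\alpha_j$ to depend both on the number of minimizing boundary projections and on the opening angles between them, and identifying its precise geometric content is an integral part of the proof.
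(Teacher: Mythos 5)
First, note that the statement you set out to prove is presented in the paper as a \emph{conjecture}: the authors do not prove \eqref{eq:asymptotic_jump_term}, they only support it with Theorem~\ref{thm:jump_term} (which is exactly the $j=0$ stratum) and two explicit computations --- a round sphere, whose skeleton is a point and where the scaling is $\sigma^{N-1}$, and a circular torus in $\bR^3$, where the scaling is $\sigma^{1}$. Your outline is the natural blow-up strategy and reproduces precisely these scalings (the $\sigma^{-1}$ from $\nabla n^\sigma$ against the Jacobian $\sigma^{j+1}$ of the normal fibration over $\Sigma_j$), so as a heuristic it is consistent with everything the paper actually establishes. But it is a programme, not a proof, and the obstructions you list at the end are exactly why the statement is left open.

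Beyond the difficulties you already flag, there is one concrete error in your setup: defining $\Sigma_j$ as the set of points with \emph{exactly} $j+2$ nearest boundary points in generic position excludes precisely the two examples you invoke as consistency checks. The centre of a round ball has the entire sphere as its set of nearest points, and every point of the torus axis has a circle's worth of nearest points; neither belongs to any of your strata, and the blow-up model there is $N_{x_0}(z)=-z/|z|$ on the normal fiber, not a field that is piecewise constant on finitely many sectors. The stratification must instead be indexed by the dimension of the affine span (or convex hull) of the contact directions $\{(x-y)/|x-y| : y\in\Pi(x)\}$, where $\Pi(x)$ is the set of nearest boundary points; this allows continua of contacts and is what makes the sphere and torus fit the scheme. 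Even with that repair, the cross-stratum contamination you mention is not a technicality to be deferred: the density $\tfrac{1}{12}|[n]|^2\langle[n],\nu\rangle$ on $\Sigma_0$ degenerates where $\Sigma_0$ terminates (e.g.\ at centres of curvature), lower-dimensional strata can accumulate on $\overline{\Sigma_0}$, and the paper itself recalls that for merely $C^{1,1}$ boundaries the closure of the skeleton can have positive Lebesgue measure. A uniform-in-$\sigma$ remainder estimate separating the successive powers of $\sigma$ is therefore the actual mathematical content of the conjecture, and it is absent from your argument.
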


\subsubsection{Case of a round sphere in $\bR^N$}
In this case, the associated skeleton $\Sigma$ is a singleton. Without loss of generality, we can assume that $\Sigma = \{ 0_{\bR^N} \}$ and therefore we have
$$
 n (x) =  - \frac{x}{|x|}, \, \forall x \neq 0.
$$
Thanks to the definition of $ n^\sigma$ and its symmetric properties, we get that
\begin{equation}
 n^\sigma(x) = w\left( \frac{|x|}{\sigma} \right)  n(x), \, \forall x \neq 0,
\label{eq:n_sigma_point}
\end{equation}
where $w:\bR^+ \to \bR$ is a smooth function such that $w(0) =0$, $\lim_{x \to +\infty} w(x) = 1$ and 
$$
0 \leq w(x) \leq 1 \text{ for every } x\geq0 .
$$
Since $\sum_{i, j} n_i \partial_j n_i n_j =0$ a.e., we get from \eqref{eq:n_sigma_point} that
$$
\begin{aligned}
S n^\sigma(x)&=  \sum_{i, j} w\left( \frac{|x|}{\sigma} \right)  n_i \left( w\left( \frac{|x|}{\sigma} \right) \partial_j  n_j \right) w \left( \frac{|x|}{\sigma}\right ) n_j 
\\
&=  - \sum_j 
\left( w \left( \frac{|x|}{\sigma} \right) \right)^2 w'\left( \frac{|x|}{\sigma} \right)  n_j \frac{x_j}{\sigma |x|}
\\
&=  - \frac{1}{\sigma}\left( w \left( \frac{|x|}{\sigma} \right) \right)^2 w'\left( \frac{|x|}{\sigma} \right) 
\end{aligned}
$$
Therefore, for every $\varphi \in C^\infty_c (\bR^N)$, we get that
$$
\begin{aligned}
\langle S n^\sigma, \varphi \rangle &=
 - \frac{1}{\sigma}
\int_{\bR^N}   \left( w \left( \frac{|x|}{\sigma} \right) \right)^2 w'\left( \frac{|x|}{\sigma} \right) \varphi (x)\,  dx
\\
&=
 - \frac{1}{\sigma}
\int_0^\infty \int_{\bS_r} \left( h \left( \frac{r}{\sigma} \right) \right)^2 w'\left( \frac{r}{\sigma} \right) \varphi(r, s) ds_{\bS_r} dr
\\
&=  - \frac{1}{\sigma} \int_0^\infty \left( w \left( \frac{r}{\sigma} \right) \right)^2 w'\left( \frac{r}{\sigma} \right) 
r^{N-1} \left (\int_{\bS^{N-1}} \varphi(r, s)\,  ds\right ) \, dr
\\
&=  - \sigma^{N-1}  \int_0^\infty w(u)^2 w'(u) \left (\int_{\bS^{N-1}} \varphi(\sigma u, s) \,  ds \right )
u^{N-1} \, du
\\
&\sim 
 - \sigma^{N-1} \int_{\bR^N} w(|x|)^2 w'(|x|) dx \varphi(0)
,
\end{aligned} 
$$
which gives 
\begin{equation}
\frac{1}{\sigma^{N-1}} \langle S  n^\sigma, \varphi \rangle 
\to \left ( - \int_{\bR^N} h(|x|)^2 h'(|x|) dx \right ) \varphi(0)
,
\label{eq:jump_term_points}
\end{equation}
as $\sigma \to 0$.

\subsubsection{Case of a circular torus in \texorpdfstring{$\bR^3$}{R3}}
 We now discuss the case were the skeleton $\Sigma = \{ (x_1, x_2 ,0 ) \in \bR^3 \, | \, x_1^2 + x_2^2 = 2^2\}$ is a planar circle in $\bR^3$, 
 and $ \Omega_\sigma= \{ x \in \bR^3\, | \, d(x, \Sigma) \leq \sigma \}$ is a circular torus obtained by thickening the curve.

For $\sigma>0$ small enough, and for every $x \in \Omega_\sigma$, we can write that 
$$x = s + \sigma u,$$ where $s = \pi_\Sigma(x)$ is the orthogonal projection of $x$ on $\Sigma$ and $u\in N_s \Sigma$ is a unit vector of the normal space at $s$. In particular, the map
$$
\begin{aligned}
  \tau_\sigma : N &\to \Omega_\sigma
  \\
  (s, u) &\mapsto s + \sigma u,
\end{aligned}
$$
where $N = \bigsqcup_{s \in \Sigma}  N_s =  \bigsqcup_{s \in \sigma} \{ u \in N_s \Sigma \, | \, |u | =1 \} $,  is a diffeomorphism.
We take the canonical orthonormal basis on $\Omega_\sigma$ denoted by $(e_s, e_{u_1}, e_{u_2})$. Notice that this remains true for any regular curve in $\bR^3$.

Thanks to the definition of $\tau_\sigma$, for every $x_0^\sigma \in \Omega_\sigma$, we can write that 
$$ 
x_0^\sigma = \tau_\sigma (s_0, u_0)
.
$$
Using integration by substitution, we obtain that
\begin{equation}
\begin{aligned}
 n^\sigma(x_0^\sigma) &= \int_{\Omega_\sigma}  n(y) h^\sigma(x_0^\sigma - y) \, dy
\\
                      &= \int_\Sigma \left ( \int_{N_s} n(\tau_\sigma (s, u)) h^\sigma(\tau_\sigma(s_0, u_0) - \tau_\sigma(s, u))J_{\tau_\sigma}(s, u)\, du\right )   \, ds 
                      \\
                      &= \frac{1}{\sigma^3}\int_\Sigma \left ( \int_{N_s} n(\tau_\sigma (s, u)) h\left(\frac{\tau_\sigma(s_0, u_0) - \tau_\sigma(s, u)}{\sigma} \right) J_{\tau_\sigma}(s, u)\, du \right )  \, ds 
                      \\
                      &= \frac{1}{\sigma^3}\int_\Sigma \left ( \int_{N_s} n(\tau_\sigma (s, u)) h\left(\frac{s_0 - s}{\sigma} + (u_0 - u) \right) J_{\tau_\sigma}(s, u) \, du\right )  \, ds,
\end{aligned}
\label{eq:jump_term_filaments_proof1}
\end{equation}
where $J_{\tau_\sigma}(s, u)$ is the Jacobian of $\tau_\sigma$.
Since $J_{\tau_\sigma}(s, u) = \sigma^2 J_{\tau_1}(s, u) = O(\sigma^2)$, thanks to \eqref{eq:jump_term_filaments_proof1} and the symmetries of $n$ and $h$, we get that 
\begin{equation}
  \partial_u n^\sigma (x_0^\sigma) = O \left ( \frac{1}{\sigma} \right) \mbox{ and }
  \partial_s n^\sigma(x_0^\sigma) = 0,
  \label{eq:jump_term_filaments_proof2}
\end{equation}
as $\sigma \to 0$.
Finally, injecting \eqref{eq:jump_term_filaments_proof2} into Definition \ref{def:jump_term} of $S n^\sigma$, we have that  
\begin{equation}
\begin{aligned}
\langle S   n^\sigma, \varphi \rangle &= 
\sum_{i, j} \int_{\Omega_\sigma}  n_i^\sigma \partial_j  n_i^\sigma  n_j^\sigma \, \varphi \, dx
\\ 
                                      &= O\left (\int_\Sigma \left ( \int_{N_s} \partial_u n^\sigma(\tau_\sigma(s, u)) \varphi (\tau_\sigma(s, u)) J_{\tau_\sigma}(s, u)\, du \right) \, ds \right) 
                                      \\ 
                                      &= O\left( \frac{\sigma^2}{\sigma}\int_\Sigma \varphi(s) \, ds\right )
                                      \\
                                      &= O\left( \sigma \int_\Sigma \varphi(s) \, ds\right ),
\end{aligned}
\end{equation}
as $\sigma \to 0$.

\begin{rmk}

The numerical computations in Figures \ref{Jump2D} and~\ref{Jump3D} show that the magnitude of the skeletal term appears to be linear with respect to $\sigma$ and does not depend on the size of the skeleton. 
For example, in the case of two concentric circles as in Figure~\ref{Jump2D}, we can see that the skeletal term calculated at the centre has the same order of magnitude (up to a sign difference) as on the circle. This does not contradict \eqref{eq:asymptotic_jump_term} for the following reason: on the one hand, the Dirac mass $\delta_0$ at the centre can be represented numerically by the inverse of the pixel area, which gives in 2D

$$
\mu_0(\Sigma) \simeq \frac{1}{\sigma^2}.
$$
On the other hand, the uniform measure on the discrete circle 
satisfies $\mu_1(\sigma)  \simeq \frac{1}{L \sigma } 
$ since the circle of length $L$ is approximated numerically by a band of area $L \sigma$ in $2D$.
Therefore, thanks to \eqref{eq:asymptotic_jump_term}, we have that $
\sigma \mu_0(\Sigma) \simeq L\mu_1(\sigma) \simeq \frac{1}{\sigma},
$ which justifies the fact that the skeletal term has approximately the same magnitude at the center and on the circle in Figure \ref{Jump2D}.
\end{rmk}

\section{ Allen--Cahn equation with a  skeleton-aware  forcing term} 
In this section, we address the approximation of the mean curvature flow with a forcing term based on skeletal approximation that acts as a thickness constraint for the evolving set. Thanks to the results on the characterisation of singular sets by the skeletal term of Definition \ref{def:jump_term}, as well as our analysis of the perturbed Allen-Cahn equation, a natural candidate for the phase field model we are trying to identify is the Allen-Cahn equation coupled with  the skeletal term, i.e.,
\begin{equation}
\partial_t u^\varepsilon = \Delta u^\varepsilon - \frac{W'(u^\varepsilon)}{\varepsilon^2} (1 + f^{\sigma}_{u^{\varepsilon}})
\label{eq:allen-cahn_jump_term}
\end{equation}
where 
$$ f^{\sigma}_{u^{\varepsilon}} = c (h^\sigma * |S  n^\sigma_{u^{\varepsilon}}|) \quad \text{and } \quad n^\sigma_{u^{\varepsilon}} = h^\sigma * \frac{\nabla u^{\varepsilon}}{|\nabla u^{\varepsilon}|}.$$
Here, $h^\sigma$ is again the Gaussian kernel of standard deviation $\sigma>0$  and $c$ is a constant to be chosen sufficiently large in order to enforce 
the thickness constraint. 

The theoretical analysis of such a phase field model seems rather difficult and is beyond the scope of this article.  Our first goal is to propose a semi-implicit numerical discretisation 
of this model and to illustrate its potential for handling minimum thickness constraint and, more generally, for obtaining approximations of motion by mean curvature of filaments and other fine structures.  

 The code used for these experiments is  available on the GitHub repository. \footnote{\url{https://github.com/eliebretin/skeleton_Allen_Cahn.git}}

\subsection{ Numerical discretization and a first example }

To approximate numerically the above flow, we propose a quasi-static approach with explicit integration of the skeletal term. More precisely, we consider the sequence of approximate solutions $(u^n)_n$  defined recursively 
by $u^{n+1}(x) \simeq v(x,\delta_t)$ where $v$ is solution to the following PDE 
\begin{equation}
\begin{cases}
   \partial_t v(x,t) &= \Delta v(x,t) - \frac{W'(v(x,t))}{\varepsilon^2}(1 + f^{\sigma}_{u^{n}}(x)) \\
   v(\cdot,0) &= u^{n}
  \end{cases}
\label{eq:quasi-static}
\end{equation}
and $\delta_t$ represents the time step. Notice that the above equation is variational as it is the $L^2$-gradient flow of the perturbed Cahn--Hilliard energy 
$$ J^{\varepsilon}_{u^n}(v) =  \int_{\bR^N}(\frac{\varepsilon}{2} |\nabla v|^2 + \frac{1}{\varepsilon} W(v) \left( 1 + f^{\sigma}_{u^{n}}) \right)dx,$$

From a numerical point of view, equation~\eqref{eq:quasi-static} is solved  in a box $Q$ with  periodic boundary conditions. We use 
a semi-implicit approach based on a convex-concave splitting of the perturbed Cahn--Hilliard energy. More precisely, we define the solution $u^{n}$ as an approximation of $u$ at the time $n \delta_t$ given by the following scheme
$$ \frac{u^{n+1} - u^{n}}{\delta_t} =  (\Delta u^{n+1} - \frac{\alpha}{\epsilon^2} u^{n+1})    -  \left((1 + f^{\sigma}_{u^{n}})  \frac{ W'(u^n)}{\varepsilon^2} -  \frac{\alpha}{\epsilon^2} u^{n} \right). $$
 The stabilization coefficient $\alpha$ is  assumed to be sufficiently large  to ensure the concavity of the functional 
$$v \mapsto \int_{Q} (1 + f^{\sigma}_{u^{n}}) \frac{W(v)}{\epsilon^2}  - \alpha \frac{v^2}{2\varepsilon^2} dx.$$
Indeed, in that case, the  perturbed Cahn--Hilliard energy  satisfies the decreasing property
$$ J^{\varepsilon}_{u^n}(u^{n+1}) \leq J^{\varepsilon}_{u^n}(u^{n}),$$
and the scheme is unconditionally stable with respect to energy.  A convenient formulation of the scheme is 
$$
 u^{n+1}=   ( I_d - \delta_t ( \Delta  - \frac{\alpha}{\epsilon^2} I_d) )^{-1} \left(u^{n}  - \delta_t \left( (1 + f^{\sigma}_{u^{n}}(x)) \frac{W'(u^n)}{\varepsilon^2} -  \frac{\alpha}{\epsilon^2} u^{n} \right)\right) $$
because the operator $( I_d - \delta_t ( \Delta  - \frac{\alpha}{\epsilon^2} I_d) )^{-1} $ can be computed very efficiently in Fourier space using 
the Fast Fourier Transform and the periodic boundary conditions on the box $Q$. \\

In practice, we consider the box  $Q = [-0.5,0.5]^3$, discretized with $N$ nodes on each axis.  All the $3D$ numerical experiments 
presented in this section are  obtained  using a resolution of $N = 2^7$, a phase-field parameter $\varepsilon = 2/N$, 
and $\delta_t = \varepsilon^2$. As for the skeletal term, we set $\sigma^2 = 0.1 \varepsilon^2$ and $c = 0.35 \varepsilon N^3$. \\

The first numerical experiment illustrates the influence of the skeletal term in our model. We consider a dumbbell as the initial set, and we recall that the classical mean curvature flow produces a topology change in finite time.
This is illustrated on the first line of Figure \ref{fig_dumbbell} where we plot the $0$ level set of an approximate solution $u^{n}$ of the Allen-Cahn equation computed at different times $t^{n} = n h$ without using the skeletal term (which corresponds to using $c=0$).
On the second line, we plot the level set $0$ of the approximate solution $u^{n}$ of our model.. We can clearly see here that the skeleton term forces the topological conservation along iterations.

\begin{figure}[htbp]
\centering
		\includegraphics[width=15cm]{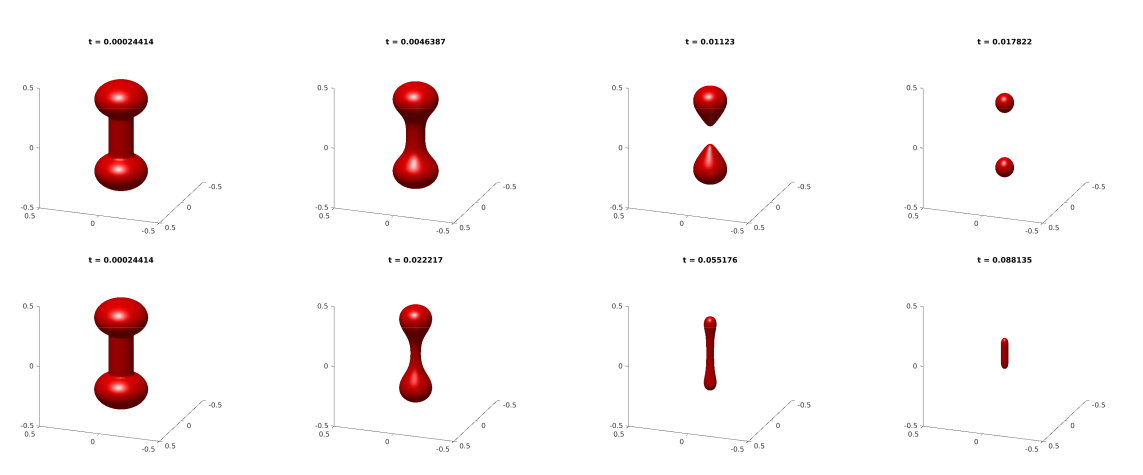} 
\caption{Evolution of a dumbbell: the solution $u^{n}$ plotted at different times $t$.
The first and second lines correspond to the approximate mean curvature flow with or without, respectively, the additional skeletal term.}
\label{fig_dumbbell}
\end{figure}

\subsection{Second example: a circle in 3D}
We consider the evolution of a circle in $3D$ as an example of a smooth codimension 2 mean curvature flow. The advantage is that the evolution is explicit and corresponds to the evolution of a circle whose radius satisfies

$$
V_n = -\frac{d}{dt} R = \frac{1}{2R},
$$
thus
$$
R(t) = \sqrt{ R(0)^2 - 2t}
$$

We  plot in Figure \ref{filament_circle} the solution at
different times $t^{n} = n h$. We can clearly observe a circle which decreases along the iterations.
We also illustrate in the last plot that the evolution along the iterations of the squared mass of $u$,
 $$ t \mapsto \left(\int_Q u(x,t)\, dx \right )^2,$$ 
is proportional to the square of the radius 
of the circle  which is expected to decrease linearly in the case of the mean curvature flow. 

 We can clearly  observe such a numerical decrease (except at the very beginning of the simulation), which means that the circle evolves consistently with the higher codimension mean curvature flow. One of the reasons for the 'bad' behaviour at the very beginning is probably an inconsistent initialisation that is quickly regularised by the flow.
 
\begin{figure}[htbp]
\centering
		\includegraphics[width=16cm]{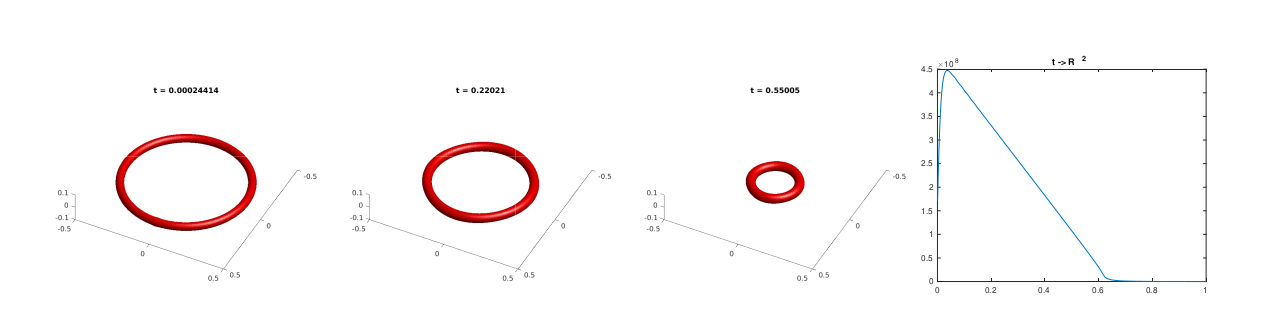} 
\caption{Evolution of a circle: the solution $u^{n}$ plotted at different times $t$ 
and the squared mass $t \mapsto (\int_Q u(x,t) dx)^2$ along the iterations.}
\label{filament_circle}
\end{figure}

\subsection{ Influence of the parameter $\sigma$  on the mean curvature flow approximation}
To illustrate the influence of the thickening parameter $\sigma$ for the approximation of the higher codimension mean curvature flow, at least in the case of a circle of radius $R$ of codimension $2$, we propose to compute the average of the mean curvature vector on a slice of the torus of thickness $\sigma$  to see that this estimate leads to a normal velocity given by  
$$ V_n =  \frac{1}{2 R} + O(\delta^2/R^3).$$
In particular, as long as $\delta^2/R^2 << 1$, the thickening technique provides a good approximation of the codimension $2$ mean curvature flow.\\

Consider, for all $u \in \bS$, $\Psi(u) = 
\left (
\begin{aligned}
&R \cos u 
\\
&R \sin u 
\end{aligned}
\right )
,
$ 
which parametrizes a circle $\Gamma$ of radius $R>0$ lying on the horizontal plane $\bR^2 \times \{0\}$ in $\bR^3$.
The curvature of $\Gamma$ is $1/R$.
\par
We start by thickening the curve with an offset $\delta>0$, hence we obtain a circular torus.
Let $\Phi$ be a parameterization of the torus defined by 
$$
\Phi(u, v) =  
\left( 
\begin{aligned}
(R + \delta \cos v) &\cos u 
\\
(R + \delta \cos v) &\sin u 
\\
\delta &\sin v
\end{aligned}
 \right).
$$
By direct computations, the inner normal $ n$ and the scalar mean curvature $H$ can be written as 
$$
 n  (u, v)
=
-
\left( 
\begin{aligned}
&\cos u \cos v 
\\
&\sin u \cos v
\\
&\sin v
\end{aligned}
 \right)
$$
and 
$$
H(u, v)  = - \left( \frac{1}{\delta} + \frac{\cos v}{ R + \delta \cos v}\right)
.
$$
The velocity of the torus on $\Phi(u, v)$ under the flow \eqref{eq:forced_mean_curvature_flow} is 
\begin{equation}
{\bf V}_{\Phi(u, v)} = {\bf H}(u, v) + {\bf g}_\sigma (\delta)
.
\label{eq:torus_velocity}
\end{equation}
We  denote the mean curvature vector as ${\bf H}:= H  n$.
For every $u \in \bS$, the motion of the point $\Psi(u)$ is the average velocity of the torus on the associated sectional points, that is the mean value of $V_{\Phi (u, v)}$ for all $v \in \bS$.
\begin{equation}
\begin{aligned}
{\bf  V_{\Psi(u)}} &= 
\frac{1}{2\pi} \int_0^{2\pi} 
V_{\Psi(u, v)}  n (u, v) dv 
\\
&=
\frac{1}{2\pi} \int_0^{2\pi} {\bf  H}(u, v) dv + 
\frac{1}{2\pi} \int_0^{2\pi} g_\sigma(\delta) n(u, v) dv 
\\
&= 
 - \frac{1}{2\pi}
\begin{pmatrix}
&\cos u \int_0^{2\pi} \frac{\cos^2 v}{R + \delta \cos v} dv 
\\
&\sin u \int_0^{2\pi} \frac{\cos^2 v}{R + \delta \cos v} dv 
\\ 
&\int_0^{2\pi} \frac{\cos v \sin v}{ R + \delta \cos v} dv 
\end{pmatrix}
.
\end{aligned}
\label{eq:torus_velocity1}
\end{equation}
Notice that the forcing term $g_\sigma(\delta)$ in \eqref{eq:torus_velocity} does not have in average any impact on the velocity in \eqref{eq:torus_velocity1}.
\par
Remark that $\displaystyle
\int_0^{2\pi} \frac{\cos v \sin v}{ R + \delta \cos v} dv 
= 0
$
and 
\begin{equation}
\begin{aligned}
\int_0^{2\pi} \frac{\cos^2 v}{R + \delta \cos v} dv 
&=
\int_0^{\frac{\pi}{2}} \left ( \frac{\cos^2 v}{R + \delta \cos v}  + \frac{\cos^2 v}{R - \delta \cos v} \right ) dv
+
\int_{-\frac{\pi}{2}}^0 \left ( \frac{\cos^2 v}{R + \delta \cos v}  + \frac{\cos^2 v}{R - \delta \cos v} \right ) dv
\\
&=
4R \int_0^{\frac{\pi}{2}} \frac{\cos^2 v}{R^2 - \delta^2\cos^2 v}dv
\\
&=
\frac{4R}{\delta^2} \left(  R^2 \int_0^{\frac{\pi}{2}}  \frac{1}{R^2 - \delta^2 \cos^2 v} dv - \frac{\pi}{2}\right)
.
\end{aligned}
\label{eq:torus_velocity2}
\end{equation}
Moreover, by the change of variables $x=\tan v$, we get that
\begin{equation}
\int_0^{\frac{\pi}{2}} \left (\frac{1}{R^2 - \delta^2\cos^2 v} \right )dv = \int_0^\infty \left ( \frac{1}{R^2(1+x^2) - \delta^2} \right ) dx
= \frac{\pi}{2} \frac{1}{R \sqrt{R^2 - \delta^2}}.
\label{eq:torus_velocity3}
\end{equation}
Therefore, we deduce from \eqref{eq:torus_velocity1}, 
\eqref{eq:torus_velocity2} and \eqref{eq:torus_velocity3} that
\begin{equation}
{\bf  V}_{\Psi (u)} =  -
\frac{R}{\delta^2} \left( 1 - 
\frac{1}{\sqrt{1 - \frac{\delta^2}{R^2}}} \right)
\begin{pmatrix}
\cos u
\\
\sin u
\\ 
0
\end{pmatrix}
\sim 
\frac{1}{2R} \left ( 1+ O\left (\frac{\delta^2}{R^2}\right ) \right )  n_{\Psi(u)},
\label{eq:torus_asym_velocity}
\end{equation}
where $ n_{\Psi(u)}$ is the normal vector of $\Psi$.
Consequently, as explained previously, the circle of radius $R>0$ moves with a normal velocity $V_n \sim \frac{1}{2R}$ at first order, i.e. consistently with the codimension 2 mean curvature flow.

\subsection{Applications to smooth or non smooth curves (and filaments) in 3D}

We present two filament evolutions (i.e. smooth curves) in figure \ref{fig_filament1}.
The first column shows the evolution of a simple filament which converges, using periodic boundary conditions, to a simple line. The second column shows the evolution of a more complex filament with the presence of a triple junction. 

\begin{figure}[!htbp] 
\centering
    	\includegraphics[width=16cm]{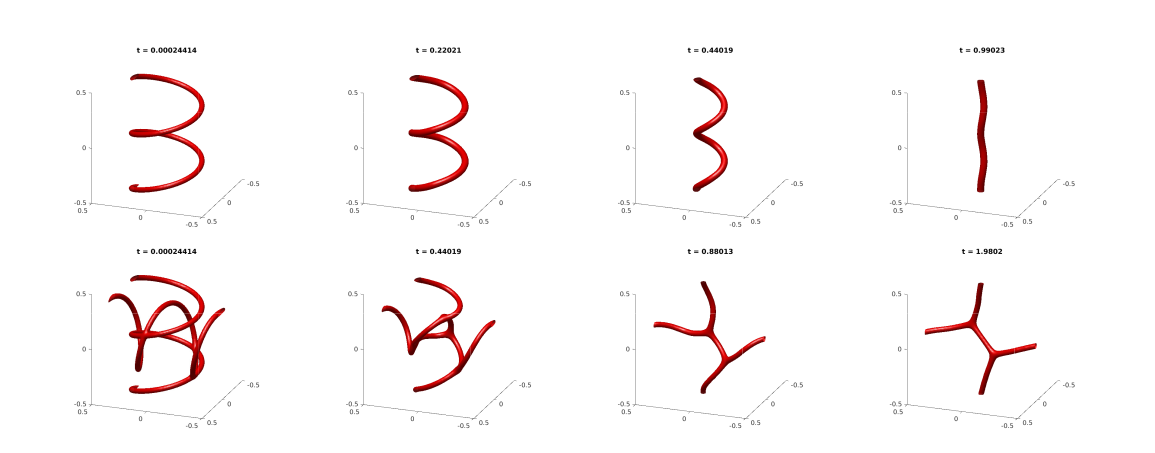} 
        \caption{Numerical approximations of mean curvature flows of filaments: each line corresponds to the evolution obtained at different times from a given initial condition.}
\label{fig_filament1}
\end{figure}

To get a better idea of the numerical flows calculated with our model, videos (.avi format) can be downloaded from the following addresses.

\begin{itemize}
\item Video of the flow shown in Figure~\ref{fig_filament1}, top: 
\href{http://math.univ-lyon1.fr/homes-www/huang/manuscrit/video/filament1.avi}{Filament~1}.
\item Video of the flow shown in Figure~\ref{fig_filament1}, bottom: 
\href{http://math.univ-lyon1.fr/homes-www/huang/manuscrit/video/filament2.avi}{Filament~2}.
 \item In the previous video, the filament remains connected but the inner cycle disappears. It is actually possible to play with that by simply changing the weight of the skeletal term: a higher weight yields a higher sensitivity. This is the choice made for the numerical simulation shown in the following third video where the inner cycle is conserved: \href{http://math.univ-lyon1.fr/homes-www/huang/manuscrit/video/filament3.avi}{Filament~3}. 
\end{itemize}

\section{Application to the Steiner problem}
The numerical experiments and observations of the previous sections on the Allen-Cahn equation coupled with 
the skeletal term \eqref{eq:allen-cahn_jump_term} lead us naturally to address the application
to the Steiner problem in 3D.  Recall that the Steiner problem consists in finding, for a given  collection of points 
$a_0, \dots, a_N \in Q$, a compact connected set $K \subset Q$ containing all 
the $a_i$'s and  having minimal length. In other words, it amounts to solving the following minimization problem 
\begin{equation}
 \min \{  \cH^1(K),\, K \subset Q, \,K \;\text{connected,}\;a_i\in K, \forall i\}.
\end{equation}
The numerical approximation of solutions to this problem is notoriously difficult, especially in dimension $\geq 3$,  see for instance \cite{Karp,GeoSteiner,DAmbrosio,MR3337998,BonafiniOrlandiOudet,BonafiniOudet,ChambolleFerrariMerlet2019-1,ChambolleFerrariMerlet2019-2,MR4011534}. The model we propose provides an effective and natural way to approximate these solutions in 3D (and actually even in higher dimension for the extension is straightforward).

\subsection{A $\sigma$-thickened Steiner problem}
We consider an approximation of the Steiner problem in 3D for the $\sigma$-tubular set  $K_{\sigma}:= \{ x \in \bR^3\, |\, \operatorname{dist}(x, K) \leq  \sigma \}$ which is the $ \sigma$-thickening of $K$. 
We choose the thickening parameter $\sigma$ to be small enough as in Lemma~\ref{claim:minimal_distance} so that  
the minimal distance between the boundary of $K_\sigma$ and its skeleton $K$ is ensured.
\par
\noindent It is clear that the length of $K$ is approximated by the perimeter of $K_{\sigma}$ in the sense that 
$$\cP(K_{ \sigma}) \simeq 2\pi \sigma \cH^1(K).$$ 
Moreover, the property that $K$ contains all points $a_i$ can be replaced by the \emph{inclusion constraint}:
\begin{equation}
  \cup_{i=1}^{N}B(a_i, \sigma) \subset  K_{ \sigma},
\label{eq:inclusion_constraint}
\end{equation}
 which means that $K_{\sigma}$ contains all the balls $B(a_i, \sigma)$ of radius $\sigma$. 

We now explain how to obtain  a phase-field approximation of the   minimization problem
\begin{equation*}
 \min \{  \cH^2(K_{ \sigma}) \, |\,     \cup_{i=1}^{N}B(a_i, \sigma) \subset K_{ \sigma} \text{ and } \operatorname{dist}(\partial K_{ {\sigma}}, \Sigma(K_{{\sigma}})) \geq {\sigma}  \},
\end{equation*}
where $\Sigma(K_{{\sigma}})$ is the skeleton of $\partial K_{{\sigma}}$.

\subsection{Phase-field approximation of the thickened Steiner problem}

There is a natural way to approximate the inclusion constraint \eqref{eq:inclusion_constraint} using phase fields, see~\cite{bretin_slice} . This can be done by first introducing the function $u^{\varepsilon}_{\text{in}}$ defined by
$ u^{\varepsilon}_{\text{in}}(x) := q \left( \frac{\operatorname{dist}(x,\cup_{i=1}^{N} \{a_i\}) - \tilde{\sigma}}{\varepsilon} \right)$,
where  $\operatorname{dist}(x,\cup_{i=1}^{N} \{a_i\})$  denotes the distance function to the points $a_i$ and
$q$ is the usual optimal profile associated with the double-well function. The inclusion constraint \eqref{eq:inclusion_constraint} can thus be implemented by considering the inequality constraint:  
$u^{\varepsilon}_{\text{in}} \leq u^\varepsilon$.
Indeed, it is not difficult to observe that 
$$  u^{\varepsilon}_{\text{in}} \leq u^\varepsilon \quad  \implies  \quad  \cup_{i=1}^{N}B(a_i,\tilde{\sigma}) \subset \left\{ x ; u^\varepsilon(x) \geq 0  \right\} $$
Therefore, we  define recursively the sequence $\{ u^{n} \}_{n \in \bN}$ by 

$
u^{n+1}(x)= \max(\tilde{u}^{n+1},u^{\varepsilon}_{\text{in}}(x)),
$
where
$$
 \tilde{u}^{n+1}=   ( I_d - \delta_t ( \Delta  - \frac{\alpha}{\epsilon^2} I_d) )^{-1} \left(u^{n}  - \delta_t \left( (1 + f^{\sigma}_{u^{n}}(x)) \frac{W'(u^n)}{\varepsilon^2} -  \frac{\alpha}{\epsilon^2} u^{n} \right)\right) $$

\subsection{Numerical experiments}
As previously, we define $Q = [-0.5,0.5]^3$ and use the following settings: $N = 2^7$, $\varepsilon = 2/N$, $h = \delta_t = \varepsilon^2$,
$\sigma^2 = 0.1 \varepsilon^2$,  $c = 0.35 \varepsilon N^3$, and $\tilde{\sigma} = 0.02$. 

The first example in Figure \ref{fig_steiner_cube} represents the case where the $a_i$'s are the vertices of a cube. The initial set  
is also a cube containing all  nodes $a_i$. We show in Figure~\ref{fig_steiner_cube} approximate solutions at different times of the phase field flow.
\par

We note that although the initial set is of dimension $3$ (the initial cube), the stationary solution is close to a tubular thickening of a set of dimension $1$ containing all the nodes $a_i$. 
We observe that this solution is very close to the Steiner tree associated with the vertices of the cube, in particular all the angles at each triple junction are equal to $2 \pi/3$.
\par
As a comparison, we  consider in Figure \ref{fig_steiner_alea} an example with $10$ points randomly distributed in $Q$ which  leads exactly to the same conclusion. 

 \begin{figure}[H]
 \centering
 	\includegraphics[width=16cm]{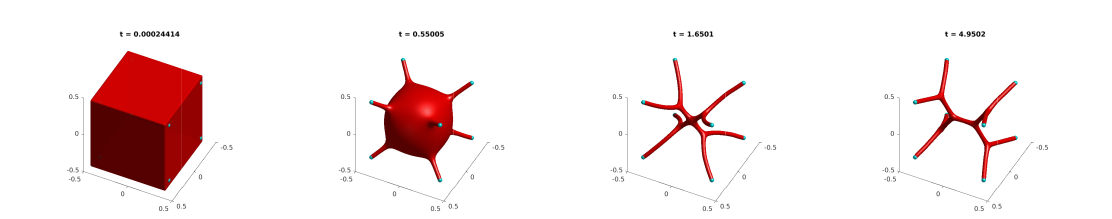} 
\caption{A Steiner tree associated with the vertices of a cube: illustration of the approximate solutions at different times along the numerical flow.
 The red and blue interfaces are, respectively, the $0$-level set of $u^{n}$ and the set $ \cup_{i=1}^{N}B(a_i,\tilde{\sigma})$.}
 \label{fig_steiner_cube}
 \end{figure}
 \vspace*{-0.5cm}
  \begin{figure}[!htbp]
 \centering
         	\includegraphics[width=16cm]{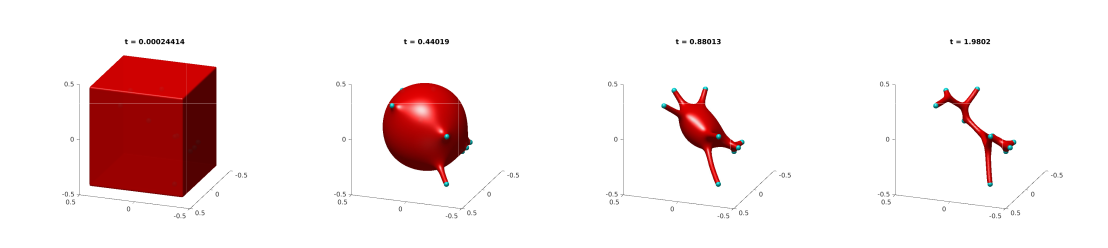} 
\caption{A Steiner tree associated with $10$ points randomly chosen in $Q$:  illustration of the approximate solutions at different times along the numerical flow. 
 The red and blue interfaces are, respectively, the $0$-level set of $u^{n}$ and the set $ \cup_{i=1}^{N}B(a_i,\tilde{\sigma})$.}
 \label{fig_steiner_alea}
 \end{figure}

To get a better idea of the numerical flows computed with our model, videos (in .avi format) can be downloaded at the following addresses:

\begin{itemize}
\item Video of the flow of Figure~\ref{fig_steiner_cube}: 
\href{http://math.univ-lyon1.fr/homes-www/huang/manuscrit/video/Steiner_cube-vertices.avi}{Approximation of the Steiner set of cube's vertices}.
\item Approximation of the Steiner tree associated with 50 randomly chosen points: 
\href{http://math.univ-lyon1.fr/homes-www/huang/manuscrit/video/Steiner_50-random-points.avi}{Approximation of the Steiner set of 50 random points}.
\end{itemize} 

\section{Application to the Plateau problem}

Our last numerical application is devoted to the celebrated Plateau problem.
Recall that the Plateau problem was formulated by Lagrange in 1760 and consists in finding, for a given closed Jordan curve $\gamma$, a surface $E$ in $\bR^3$ with a (locally minimal) area such that 
the boundary of $E$ coincides with  $\gamma$.    
In other words, it amounts to solving the following minimization problem:

\begin{equation}\label{Intro:PlateauPb}
 \min \{  \cH^2(E),\; E \subset \Omega,\; \text{connected and such that } \partial E = \gamma \}.\end{equation}

The existence and regularity of solutions to this problem have been studied in different contexts. For example, in the case of smooth and orientable solutions, the reference \cite{MR1501590} provides relevant insights, while \cite{MR117614} deals with the existence and uniqueness of soap films, treating both orientable and non-orientable surfaces, and potentially multiple junctions. \\

Many numerical methods have been developed to approximate minimal surfaces. In \cite{MR1502829,MR0458923,MR942778,MR1613695,MR1613699}, a parametric representation of the surface is used, itself discretised by a finite element approach.  Numerical techniques involving an implicit representation of minimal surfaces have been developed for instance in \cite{MR1214016}, \cite{MR2143330}, where a level set method is employed. Phase field approaches using the theory of currents have been introduced and analysed, for example in \cite{ChambolleFerrariMerlet2019-2} or \cite{Wang:2021:CMS}, offering particularly suitable numerical approximations, but restricted to oriented surfaces.

\subsection{A  $\sigma$-thickened Plateau problem}

We consider a $ \sigma$-thickened minimal surface problem for the $\sigma$-tubular thickening of a given Jordan curve $\gamma$  
 $$ \gamma_{{\sigma}} := \left\{ x ; \operatorname{d}(x,\gamma) \leq {\sigma} \right\},$$
 by considering  the following  minimization problem:
\begin{equation}\label{Intro:PlateauPb_sigma}
 \min \left \{  \cP(E_{{\sigma}}) + \frac{c}{{\sigma}} \cH^3 (E_{{\sigma}}) ; E_{{\sigma}} \subset \bR^N, \text{connected and } \gamma_{{\sigma}} \subset E_{{\sigma}} \right \}.
 \end{equation}
where $\cH^3(E_{{\sigma}})$ stands for the volume of $E_{{\sigma}}$.
 Here, ${\sigma}$ being chosen sufficiently small, the volume term is present to ensure  that 
 $E_{{\sigma}}$ has a  thickness of size ${\sigma}$ which requires the existence of a connected  set $E$ 
 such that 
 $$  E_{{\sigma}} \simeq \{ x \in \bR^N ; \operatorname{dist}(x,E) \leq {\sigma} \} \quad  \text{ and } \quad  \cP(E_{{\sigma}}) \simeq 2 \cH^2(E).$$
Figure~\ref{fig_plateau_truc_volume} shows what may happen if this volume term is not used: it it not possible to get in the limit a "thin" volume which approximates a minimal surface. 

As before, from an initial connected set, the connectedness property is preserved thanks to the skeletal term in the penalized mean curvature flow. 

\begin{rmk}
Unlike the thickened Plateau problem, there is no need to penalise volume in the thickened Steiner problem. The reason for this is simple: in the Steiner problem, the constraints on the vertices, combined with a property of minimising the surface area of the flow, lead to a natural reduction in volume to achieve a final tubular set.
\end{rmk}

\subsection{
Phase-field approximation of the thickened Plateau problem
}
Similarly to the thickened Steiner problem, let us define $u^{\varepsilon}_{\text{in}}$ as   
$$ u^{\varepsilon}_{\text{in}}(x) = q \left( \frac{\operatorname{dist}(x, \gamma_{\tilde{\sigma}})}{\varepsilon} \right),$$
and consider the sequence $(u^n)_n$ by combining a perturbed Allen--Cahn equation with an additional  inclusion constraint, i.e. we compute:
\begin{equation}
u^{n+1}(x)= \max(\tilde{u}^{n+1},u^{\varepsilon}_{\text{in}}),
\label{eq:phase-field_plateau-cond}
\end{equation}
where $\tilde{u}^{n+1}$ is an approximation of $v$ at time $\delta_t$ where $v$ is solution of the  flow
\begin{equation}
\left \{ 
\begin{aligned}
   \partial_t v(x,t) &= \Delta v(x,t) - \frac{W'(v(x,t))}{\varepsilon^2}(1 + f^{\sigma}_{u^{n}}(x)) + 
   \frac{c_{\text{volume}}}{\varepsilon \sigma} \sqrt{2 W(u^n)}, \\
   v(\cdot,0) &= u^{n}.
\end{aligned}
\right .
\label{eq:phase-field_plateau}
\end{equation}
More precisely, similarly to the Steiner problem, we use the following numerical approximation in practice:
$$
 \tilde{u}^{n+1}=   ( I_d - \delta_t ( \Delta  - \frac{\alpha}{\epsilon^2} I_d) )^{-1} \left(u^{n}  - \delta_t \left( (1 + f^{\sigma}_{u^{n}}(x)) \frac{W'(u^n)}{\varepsilon^2} -  \frac{\alpha}{\epsilon^2} u^{n} \right) +  \frac{c_{\text{volume}}}{\varepsilon \sigma} \sqrt{2 W(u^n)}, \right) $$

\subsection{Numerical experiments}

As previously, we define  $Q = [-0.5,0.5]^3$ and we set $N = 2^7$, $\varepsilon = 2/N$, $h = \delta_t = \varepsilon^2$,
$\sigma^2 = 0.1 \varepsilon^2$,  $c = 0.35 \varepsilon N^3$, and $\tilde{\sigma} = 0.02$. \\
In Figure~\ref{fig_plateau_truc_volume}, we plot the solution obtained at different times using the parameter $c_{\text{volume}} = 0$.
In this case, the volume is not penalized during the iteration. In particular, we can observe that 
the stationary set has not the right form in the sense that its thickness is not of size $\tilde{\sigma}$
and cannot be associated with a minimal $2$-dimensional surface. 

This experiment shows all the interest to  penalize the volume in our computations.
All following numerical experiments in this section are done with the setting $c_{\text{volume}}=1$.

The main purpose of Figure ~\ref{fig_plateau_truc} is to illustrate the influence of the initial set and its topology on the result. Each column corresponds to a particular choice of initial configuration, and the images show the numerical solution of the flow at different times. We can see  that our model is able to compute  a minimal surface associated with the given boundary.   Note, however, that the topology of this minimal surface logically depends on the choice of the initial set used.

Figure \ref{fig_plateau2} illustrates the ability of our method to approximate non-orientable minimal surfaces and minimal surfaces with triple line singularities.  The first column shows an example of a non-smooth (approximate) minimal surface, while the second gives an approximation of a Möbius strip. Note that in the latter case, we start from an initial connected set given by a cube with a cylindrical hole.

To get a better idea of numerical flows computed with our model, videos (in .avi format) can be downloaded at the following addresses.

\begin{itemize}
\item Video of the flow illustrated in Figure~\ref{fig_plateau_truc}, right:
\href{http://math.univ-lyon1.fr/homes-www/huang/manuscrit/video/Plateau1.avi}{An (approximate) Plateau solution}.
\item Approximation of a Möbius strip with a singularity line: 
\href{http://math.univ-lyon1.fr/homes-www/huang/manuscrit/video/Plateau2.avi}{A singular (approximate) Möbius strip}.
\item An hybrid solution combining a Steiner set and a minimal surface: 
\href{http://math.univ-lyon1.fr/homes-www/huang/manuscrit/video/PlateauHybrid.avi}{A Steiner-Plateau hybrid solution}.
\end{itemize}

  \begin{figure}[!htbp]
 \centering
         	\includegraphics[width=16cm]{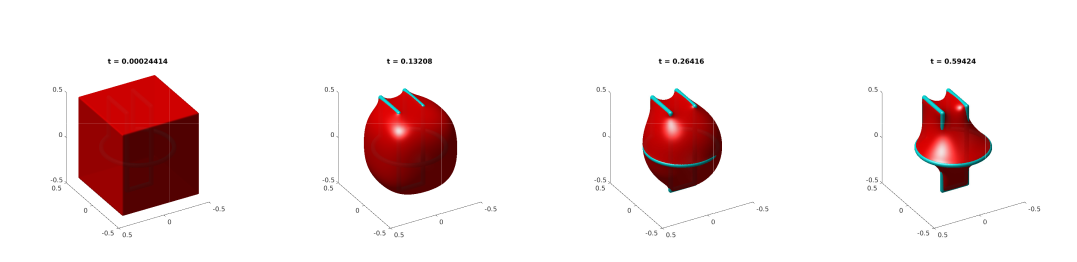} 
        \caption{Numerical illustration of the gradient flow for the thickened Plateau problem without volume penalization: the stationary shape is not at all close to a minimal surface}
         \label{fig_plateau_truc_volume}
 \end{figure}

  \begin{figure}[!htbp]
 \centering
       	 \includegraphics[width=16cm]{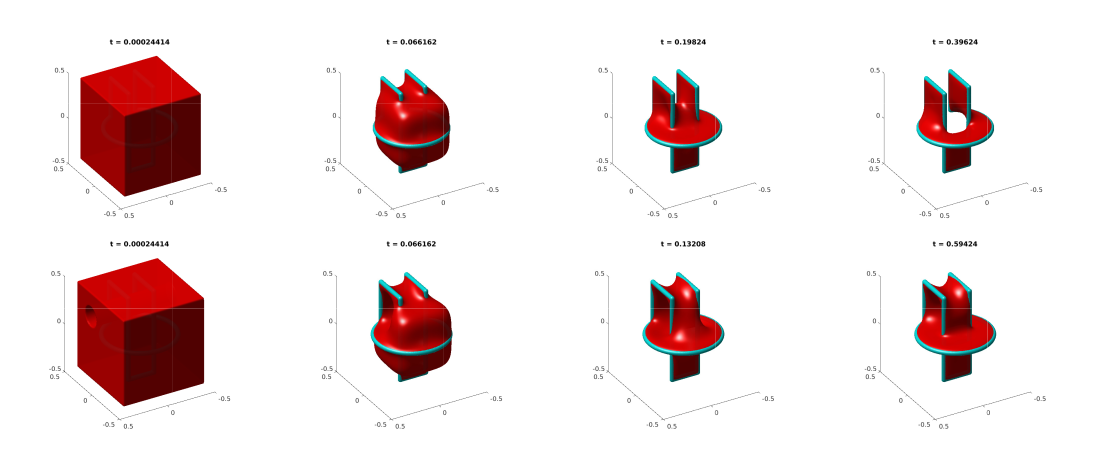} 
        \caption{Numerical approximation of solutions to the Plateau problem using our phase field method. The experiment illustrates the influence on the result of the initial set's topology. Each line shows the numerical solution at different times starting from two different initial configurations.
        }
 \label{fig_plateau_truc}
 \end{figure}
 
  \begin{figure}[!htbp]
 \centering
 	  	 \includegraphics[width=16cm]{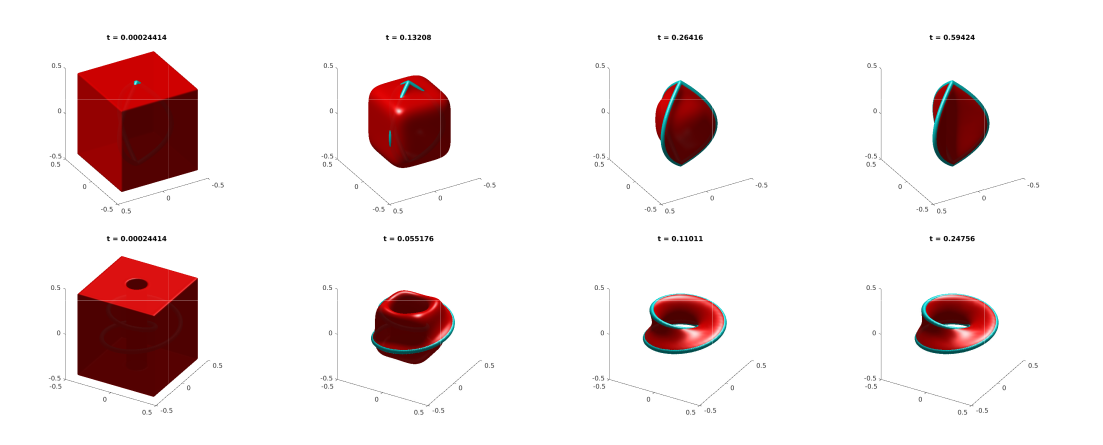} 
 	
        \caption{Numerical approximation of solutions to the Plateau problem using the proposed flow. First line: convergence to a minimal surface with a triple line singularity. 
        Second line: convergence to a M\"obius strip, an example of non-orientable surface that our model can compute.}
 \label{fig_plateau2}
 \end{figure}

\end{document}